\shorttitle}
\@nx\MakeUppercase{\the\toks@}}
\authors}
\def\Re{{\rm Re}}
\def\Im{{\rm Im}}
\newcommand{\sumprime}{\if@display\sideset{}{'}\sum%
            \else\sum'\fi}
\begin{document}

\numberwithin{equation}{section}

\newtheorem{theorem}{Theorem}[section]
\newtheorem{proposition}[theorem]{Proposition}
\newtheorem{conjecture}[theorem]{Conjecture}
\def\theconjecture{\unskip}
\newtheorem{corollary}[theorem]{Corollary}
\newtheorem{lemma}[theorem]{Lemma}
\newtheorem{observation}[theorem]{Observation}
\newtheorem{definition}{Definition}
\numberwithin{definition}{section} 
\newtheorem{remark}{Remark}
\def\theremark{\unskip}
\newtheorem{kl}{Key Lemma}
\def\thekl{\unskip}
\newtheorem{question}{Question}
\def\thequestion{\unskip}
\newtheorem{example}{Example}
\def\theexample{\unskip}
\newtheorem{problem}{Problem}

\address{Johannes Testorf: Department of Mathematical Sciences, Norwegian University of Science and Technology, Trondheim, Norway}

\email{ johannes.testorf@ntnu.no}

\title[Geometry of the Time Frequency Torus]{A Complex Geometric Approach to the Discrete Gabor Transform and Localization Operators on the Flat Torus}

 \author{Johannes Testorf}
\date{\today}

\begin{abstract} In a recent paper, the discrete Gabor transform was connected to a Gabor transform with a time frequency domain given by the flat torus. We show that the corresponding Bargmann spaces can be expressed as theta line bundles on Abelian varieties. We give applications of this viewpoint to frame results for the discrete Gabor transform. In particular, we get results which hold in higher dimension. We also give an application to asymptotics of restriction operators which arises from the asymptotic behavior of Bergman kernels for high tensor powers.
\end{abstract}
\thanks{I am grateful to my advisors Franz Luef and Xu Wang for their continued support and for their comments on earlier versions of this paper. I would also like to thank Lu\'is Daniel Abreu for his comments on an earlier version of this paper.}

\maketitle
\section{Introduction}
One of the central objects in time frequency analysis is the short-time Fourier transform (STFT), or Gabor transform. The STFT is defined with respect to some window function $g\in L^2(\mathbb R^d),$ and is given by
$$
\mathbf{V}_gf(x,\xi) := \int\limits_{\mathbb R^d}f(t)\overline{g(t-x)}e^{-2\pi i \xi^T t}dt,
$$
for a function $f$ on $\mathbb R^d$. If we introduce the time, modulation, and time-frequency shift operators, $\mathbf{T}_xf(t):=f(t-x), \mathbf{M}_\xi f(t):=e^{2\pi i \xi^T t}f(t)$, and $\pi (x,\xi):= \mathbf{M}_\xi \mathbf{T}_x$ respectively, we may rewrite this as 
$$
\mathbf{V}_gf(x,\xi) = \langle f, \pi (x,\xi) g\rangle_{L^2}.
$$
Since the STFT is the Fourier transform  of a localized piece of $f$, it may be discretized in a way similar to the classical Fourier transform. To define the discrete Gabor transform (DGT), we start by selecting $d$ integers $N_1,...,N_d$ and setting $\mathfrak{N}:=\mathrm{diag}(N_1,...,N_d).$ Then the DGT is given by
\begin{align*}
\mathbf{V}_\mathbf{g}\mathbf{f}[k,l] = \sum\limits_{m\in I_\mathfrak{N}} \mathbf f [m] \overline{\mathbf g [m-k]}e^{-2\pi i l^T \mathfrak{N}^{-1}m},\quad \mathbf{f,g}\in \mathbb C^{N_1 \times ... \times N_d},
\end{align*}
where $I_\mathfrak{N}:=\mathbb Z_{N_1} \times ... \times \mathbb Z_{N_d},$ and $k,l\in I_\mathfrak{N}$.

When $f$ comes from some periodic and sufficiently bandlimited signal,  the discrete time Fourier transform of the discretization of $f$ can be described via a finite number of samples, which means that we in fact may look at the discrete Fourier transform instead. The discrete-time Fourier transform also has a periodic image in the frequency domain.

 Thus it makes sense to investigate the DGT of signals which are periodic in time. Furthermore, since we look at a discretization, we can relate the DGT to functions which are periodic in frequency. This naturally gives rise to a torus as the resulting time frequency domain, an observation which was explored thoroughly in \cite{FlatTori} for the case $d=1$. This time frequency or phase-space torus has also been investigated in other contexts c.f. \cite{haldane},\cite{nonnenmacher1998chaotic}.

We recall the setting which was used in \cite{FlatTori}. We start with the Feichtinger algebra $S_0(\mathbb R^d),$ meaning those functions $f\in L^2(\mathbb R^d)$ for which the STFT with respect to the Gaussian window $e^{-\pi |t|^2}$ is in $L^1$. These functions are then made into "good inputs" for the DGT via the double periodization operator
$$
\mathbf{\Sigma}_\mathfrak{N}f:=\sum\limits_{k_1,k_2\in \mathbb Z^d}\mathbf{M}_{k_2}\mathbf{T}_{\mathfrak{N}k_1}f,
$$
whose image lies in $S_\mathfrak{N},$ the space spanned by Dirac combs of the form
$$
\epsilon_n:=\sum\limits_{k\in \mathbb Z^d} \delta_{n+\mathfrak{N}k},
$$
and has a Hilbert space structure given by
$$
\langle\epsilon_n,\epsilon_m\rangle_{S_\mathfrak{N}}:=\delta_{m}^n,
$$
where $\delta_{m}^n$ denotes the Kronecker delta.

As was then shown in \cite{FlatTori}, the STFT of $\mathbf{\Sigma}_\mathfrak{N}f$ is quasiperiodic in the sense,
$$
\mathbf{V}_g\mathbf{\Sigma}_\mathfrak{N}f(x+\mathfrak{N}k,\xi+l) = e^{-2\pi i k^T\mathfrak{N}\xi}\mathbf{V}_g\mathbf{\Sigma}_\mathfrak{N}f(x,\xi), \quad f,g\in S_0(\mathbb R^d), k,l\in \mathbb Z^d, 
$$
and thus can be said to have the torus $\mathbb T_\mathfrak{N}:=\mathbb R^{2d}/(\mathfrak{N}\mathbb Z^d\times \mathbb Z^d)$ as its time-frequency domain. Additionally, $\mathbf{V}_g(\mathbf{\Sigma}_\mathfrak{N}f)$ corresponds to the DGT of functions in the Feichtinger algebra $S_0(\mathbb R^d)$ which have been periodized and sampled.
Furthermore, a frame criterion for the DGT in dimension 1 was shown in \cite{FlatTori} with respect to the window $h_0^\Omega(t):= e^{-\Omega\pi t^2}$ for $\Omega>0$. This was done with the use of a Bargmann type transform. This transform has as its images holomorphic functions which live on the complex torus $\mathbb T_{\Omega}:=[0,\Omega]\times [0,1].$ The starting point of this article is the observation that the Bargmann transform takes images which are theta functions in the sense of \cite{Bea}, (see section 2) and thus the images can be interpreted as sections of explicit theta line bundles. In fact, we may interpret them as tensor powers of principal polarizations of Abelian varieties.

As we will see, this viewpoint can spare a fair amount of computation in the proof of the frame condition in \cite{FlatTori}, and will allow us to generalize this result to the higher dimensional DGT. Indeed, we will generalize the results of \cite{FlatTori} for $d$-dimensions and for our normalizations. We will use this viewpoint as well to investigate restriction operators on the time frequency torus. This means investigating Toeplitz operators for theta line bundles which make up the corresponding Bargmann-Fock space.

\subsection{Statement of Main Results}
Our results are for the case where $N_1=...=N_d=N$. So we will replace the matrix $\mathfrak{N}$ which is $N$ times the identity, simply by the integer scalar $N$. We will also replace $\mathfrak{N}$ by $N$ in the notation for the spaces and operators we have introduced thus far when we are in this case. We find that for the window 
$$
h_0^{\Omega/N }(t):=\overline{e^{-\pi i t^T ({\Omega}N^{-1})t}},\quad t\in \mathbb R^d
$$
where $\Omega$ is symmetric matrix with complex coefficients and positive definite imaginary part we may relate the image of the STFT on $S_N$ to a principally polarizing line bundle $L$, on the abelian variety $\mathbb T_\Omega:=\mathbb C^d/(-i\Omega\mathbb Z^d+i\mathbb Z^d$). This gives our generalization of Bargman-Fock space.

Setting $h_N^{\Omega/N}$ to be the image of $h_0^{\Omega/N}$ under the sampling and periodization operator
$$
\mathbf P^N_{samp} f := \left(\sum\limits_{k\in \mathbb Z^d} f(n - kN)\right)_{n\in I_N},
$$
we get the following criterion for a discrete Gabor frame in terms of various objects arising from the geometry of the line bundle $L$. In dimension 1 we can generalize the results of \cite{FlatTori}, which we will explain once we have stated the result.

\begin{theorem}\label{thm:1.1}
    The points $\{(k_j,l_j)\}_{j=1}^{K}\subseteq I_N^2$ induce a Gabor frame for the DGT on $\mathbb C^{N^d}$ with window $h^{\Omega/N}_N$, i.e. for any $\mathbf f\in \mathbb C^{N^d}$ there exists constants $A,B>0$ such that
    $$
        A\|\mathbf f\|^2_{S_N}\le \sum\limits_{j=1}^K|\mathbf{V}_{h^{\Omega/N}_0}\mathbf f[k_j,l_j]|^2 \le B\|\mathbf f\|^2_{S_N},
    $$
    when
    $$
    \lambda(L,z_1,...,z_K)< \frac{1}{N},
    $$
    where 
    $
        z_j = -i(N^{-1} \Omega k_j+\frac{l_j}{N})\in \mathbb T_\Omega,
    $
    and $\lambda(L,z_1,...,z_K)$ denotes the multipoint pseudoeffective threshold. (See section 2.2)  
    \\~~\\
    Furthermore, we get {\bf no} frame when
    there exists $N$ points $\{\Tilde{z_1},...,\Tilde{z}_N\}$ with $\sum_{i=1}^N \Tilde{z_i} \in \Lambda^o/\Lambda$, (see Section 2.1 before Lemma \ref{lemma:thmofsqr}) such that 
    $$
    \{z_1,...,z_K\}\subseteq \bigcup\limits_{i=1}^Nt_{\Tilde{z_i}}(\{z\in\mathbb T_\Omega|\vartheta_1(iz,\Omega)=0\}),
    $$
    where $t_{\Tilde{z_i}}$ is the translation by $\Tilde{z_i}$ and $\vartheta_1$ is the higher dimensional Jacobi theta function
    $$
    \vartheta_1(z,\Omega) = \sum\limits_{k\in \mathbb Z}\exp(\pi ik^T\Omega k+ 2\pi i k^Tz).
    $$
    The set $\{(k_j,l_j)\}_{j=1}^{K}\subseteq I_N^2$ is a set of interpolation for the DGT if
    $$ \epsilon(L,z_1,...,z_K)> \frac{d}{N}, $$
    where $\epsilon(L,z_1,...,z_K)$ denotes the multipoint Seshadri constant (see Section 2.2).\\
    
\end{theorem}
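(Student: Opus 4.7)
The material preceding the theorem identifies $S_N$ (up to unitary equivalence) with the space $H^0(\mathbb T_\Omega, L^{\otimes N})$ of global holomorphic sections of the $N$-th tensor power of the principal polarization $L$: if $\mathbf F$ is the section associated to $\mathbf f$, then $\mathbf V_{h_0^{\Omega/N}}\mathbf f(x,\xi)$ equals $\mathbf F(z)$ times a Gaussian weight corresponding to the canonical hermitian metric on $L^{\otimes N}$, evaluated at $z=-i(N^{-1}\Omega x+\xi/N)$. Under this dictionary the three statements in the theorem become, respectively, a weighted sampling inequality, a uniqueness statement, and an interpolation inequality for $L^{\otimes N}$ at the points $z_1,\dots,z_K$.

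The frame and interpolation halves then fall under the standard multipoint sampling/interpolation criteria for tensor powers of positive line bundles on polarized abelian varieties. I would apply (or reprove via the usual Ohsawa--Takegoshi extension and Bergman kernel estimates for the ideal sheaves of the $z_j$) the criterion that $\{z_j\}$ is a sampling set for $L^{\otimes m}$ whenever $m\lambda(L,z_1,\dots,z_K)<1$, and an interpolation set whenever $m\epsilon(L,z_1,\dots,z_K)>d$. Specialising to $m=N$ reproduces the two quantitative hypotheses $\lambda<1/N$ and $\epsilon>d/N$. Because $L$ is the principal polarization on an abelian variety, these estimates can be made effective at the specific finite tensor power $L^{\otimes N}$ rather than only asymptotically as $m\to\infty$.

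The non-frame direction is the concrete incarnation of the fact that a frame must at minimum be a uniqueness set. Here I would exploit the explicit description of $H^0(L)$ as the one-dimensional space spanned by $\vartheta_1(iz,\Omega)$: any section of $L$ vanishes on a translate of the theta divisor $\Theta=\{\vartheta_1(iz,\Omega)=0\}$. For translation parameters $\tilde z_1,\dots,\tilde z_N$ the product $\prod_{i=1}^N \vartheta_1(i(z-\tilde z_i),\Omega)$ has the same quasi-periodicity multiplier as sections of $L^{\otimes N}$ precisely when $\sum_i \tilde z_i \in \Lambda^\circ/\Lambda$, and so descends to a non-zero element of $H^0(L^{\otimes N})$ vanishing at every $z_j$ that lies in some translate $t_{\tilde z_i}(\Theta)$. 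Pulling this section back through the Bargmann isomorphism yields a non-zero $\mathbf f \in S_N$ annihilated by the analysis map $\mathbf f\mapsto (\mathbf V_{h_0^{\Omega/N}}\mathbf f[k_j,l_j])_j$, which precludes the lower frame bound.

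The main obstacle I anticipate is calibrating the hermitian and Bargmann-norm conventions so that the discrete $\ell^2$ sum appearing in the frame inequality matches the weighted pointwise norm entering the geometric sampling and Seshadri criteria; this amounts to checking that the Gaussian weight from the Bargmann kernel agrees with the canonical metric on $L^{\otimes N}$ at the prescribed points $z_j$. A secondary technical point is to confirm that the jet generation estimates controlled by $\epsilon(L,z_1,\dots,z_K)$ deliver exactly the threshold $d/N$, rather than a weaker dimension constant, which on an abelian variety can be handled using the homogeneity of $\mathbb T_\Omega$ to reduce to a single effective local computation.
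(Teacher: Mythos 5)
Your proposal follows essentially the same route as the paper: identify the DGT samples with point evaluations of sections of $L^{\otimes N}$ via the Bargmann transform, use the scalings $\lambda(L^{\otimes N},\cdot)=N\lambda(L,\cdot)$ and $\epsilon(L^{\otimes N},\cdot)=N\epsilon(L,\cdot)$ together with a pseudoeffective-threshold uniqueness argument for the frame half and a Nadel/Ohsawa--Takegoshi type extension for the interpolation half, and obtain the no-frame half from the theorem of the square (your product $\prod_i\vartheta_1(i(z-\tilde z_i),\Omega)$ is exactly the paper's section of $\bigotimes_i t_{\tilde z_i}^*L\cong L^{\otimes N}$ from Lemma \ref{lemma:thmofsqr}). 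The only point to adjust in a write-up is that the two-sided frame bounds at the fixed power $N$ come not from effective estimates special to abelian varieties but simply from finite dimensionality: a uniqueness set for $H^0(\mathbb T_\Omega,L^{\otimes N})$ yields a frame by compactness of the unit sphere of $S_N$ (Lemma \ref{lem:frameeq}), and the hypothesis $\lambda(L,z_1,\dots,z_K)<1/N$ excludes a section vanishing on the points by the elementary envelope/contrapositive argument, with no need for Bergman kernel estimates.
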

The parity condition is describing a relation given by a dual lattice, where the symplectic form is given by a Chern class. This is obtained by examining the structure of the Picard group of $\mathbb T_\Omega$, which is equivalent to the structure of the "multiplier functions" which define the quasiperiodicity of the Bargmann-type transform. More precisely, this is a direct application of the theorem of the square, an important fact about Abelian varieties. (See section 2).

Although it is less precise than the parity condition, the criteria on the Seshadri constant and pseudoeffective threshold will be easier to check in some cases. The criteria involving the pseudoeffective threshold and Seshadri constants are also "more robust" than the parity criterion in the sense that these criteria will still hold if the Bargmann domain is a more general complex manifold. In particular, we suspect that the pseudoeffective threshold should give some criterion which is stable with respect to windows given by Hermite functions of higher order.

The consequence of this result is that sampling the Discrete Gabor transform in any dimension can be related to computing the vanishing set of a single explicit theta function rather than those of a large space of holomorphic functions. More generally, the contribution of this article lies in relating the problem of finding Gabor frames for the DGT in higher dimension to a sampling problem on Abelian varieties and to objects from algebraic geometry.

In the case of dimension 1, we may express this criterion more explicitly as follows. This is an extended version of Theorem 1.3 in \cite{FlatTori}.

\begin{theorem}
    For $d=1$ the points $\{(k_j,l_j)\}_{i=1}^{K}\subseteq I_N^2$ induce a Gabor frame for the DGT on $\mathbb C^{N}$ with window $h^{\Omega/N}_N$, if $K>N$. We do not get a frame if $K<N$. If $K=N$ we get a frame precisely when the following is {\bf not} fulfilled: for any $z_1,z_2,z_3,z_4\in \mathbb Z$, the number
    $$
    \left(\Re\,\Omega\cdot z_3+z_4\right)\sum\limits_{j=1}^N\left(k_j-\frac{N}{2}+z_1\right) -\left( \Re\,\Omega \sum\limits_{j=1}^N\left(k_j-\frac{N}{2}+z_1\right)+\sum\limits_{j=1}^N\left(l_j-\frac{N}{2}+z_1\right)\right)z_3 
    $$
    
    \noindent is divisible by $N$. If $\Re\,\Omega=0,$ this reduces to the condition $N$ is even and  
    $$\sum\limits_{j=1}^N(k_j,l_j)\in N\mathbb N \times N\mathbb N.$$
\end{theorem}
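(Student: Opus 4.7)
The plan is to specialize Theorem~\ref{thm:1.1} to $d=1$, where $\mathbb T_\Omega$ is an elliptic curve, the principally polarizing bundle $L$ has degree one, and the Bargmann--Fock space $H^0(\mathbb T_\Omega, L^{\otimes N})$ is $N$-dimensional. The three regimes $K>N$, $K<N$, $K=N$ correspond to the pseudoeffective-threshold clause of Theorem~\ref{thm:1.1}, to a free-variable argument with the parity clause, and to an explicit unwrapping of the parity clause together with its converse via the theorem of the square.

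For $K>N$, on a smooth projective curve pseudoeffectiveness of a line bundle is equivalent to nonnegativity of its degree, so the multipoint pseudoeffective threshold equals $\lambda(L,z_1,\dots,z_K) = \deg(L)/K = 1/K$, and the inequality $\lambda<1/N$ is equivalent to $K>N$; Theorem~\ref{thm:1.1} then yields a frame. For $K<N$, since the zero locus of $\vartheta_1(i\,\cdot,\Omega)$ in $\mathbb T_\Omega$ is the single point $z_0 = -i(1+\Omega)/2$, each translated set $t_{\tilde z_i}(\{\vartheta_1(i\cdot,\Omega)=0\})$ is the single point $z_0+\tilde z_i$. Setting $\tilde z_j := z_j - z_0$ for $j\le K$ and using the $N-K\ge 1$ remaining free choices of $\tilde z_i$ to push the total sum $\sum_i\tilde z_i$ into the subgroup $\Lambda^o/\Lambda\subset \mathbb T_\Omega$ triggers the no-frame clause of Theorem~\ref{thm:1.1}.

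For $K=N$ the $N$ single-point sets $\{z_0+\tilde z_i\}$ must cover the $N$ points $z_j$ bijectively, so the $\tilde z_i$ are forced (up to permutation) to equal $z_{\sigma(i)}-z_0$ and the parity criterion collapses to the single lattice-membership condition
\[
  \sum_{j=1}^N z_j - N z_0 \in \Lambda^o \pmod{\Lambda}.
\]
In this regime this is also \emph{necessary} for no frame: the evaluation map $H^0(L^{\otimes N})\to\mathbb C^N$ is square, and its kernel consists of sections of $L^{\otimes N}$ that vanish at all $N$ points $z_j$; by the theorem of the square (Lemma~\ref{lemma:thmofsqr}) such a section exists precisely when the sum of the $z_j$ is congruent to $Nz_0$ modulo $\Lambda$, i.e.\ the above membership with $\Lambda^o=\Lambda$. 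Substituting $z_j = -i(\Omega k_j + l_j)/N$ and $Nz_0 = -iN(1+\Omega)/2$ yields
\[
  \sum_{j=1}^N z_j - Nz_0 = -i\Omega \cdot\frac{\sum_j k_j - N^2/2}{N} - i\cdot\frac{\sum_j l_j - N^2/2}{N}.
\]
Writing $\Omega = \Re\Omega + i\,\Im\Omega$, splitting into $\mathbb R^2$-coordinates, and pairing against a $\mathbb Z$-basis of $\Lambda = -i\Omega\mathbb Z + i\mathbb Z$ (with $\Lambda^o$ identified via the Chern form) converts the membership into the divisibility criterion in the statement: the integer parameters $z_1,z_2$ absorb the reduction of $\sum k_j,\sum l_j$ modulo $N$, while $z_3,z_4$ parametrize the test pairings against the dual basis. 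Specializing $\Re\Omega=0$ decouples the pairings, so that the integrality of $N^2/2$ forces $N$ even and one then reads off $\sum k_j,\sum l_j\in N\mathbb Z$.

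The main obstacle is the last step of the $K=N$ case: correctly identifying $\Lambda^o$ as the dual lattice supplied by the Chern class of $L$ and the theorem of the square, and carefully tracking the translation by $Nz_0$, which is what produces the $-N/2$ shifts throughout the statement. Once this bookkeeping is set up, the remainder of the argument is the observation that a nonzero section of $L^{\otimes N}$ on an elliptic curve has exactly $N$ zeros and is determined up to scalar by them, together with an elementary $\mathbb R^2$-linear-algebra calculation to pass from the lattice membership to the stated pairing-based divisibility.
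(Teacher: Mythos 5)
Your proposal is correct and follows essentially the same route as the paper: the pseudoeffective-threshold clause of Theorem~\ref{thm:1.1} for $K>N$ (you compute $\lambda(L,z_1,\dots,z_K)=1/K$ directly by degree on the elliptic curve, where the paper instead uses $\lambda=\epsilon$ in dimension one together with the bound (\ref{eq:Seshadribound}) -- both are fine), the identical construction $\tilde z_j=z_j-z_0$ with the leftover points absorbing the sum for $K<N$, and for $K=N$ you in effect rederive Proposition~\ref{prop:1d_Parity} via the theorem of the square (Lemma~\ref{lemma:thmofsqr}) before carrying out the same coordinate bookkeeping that the paper leaves as a ``straightforward computation''. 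No gaps beyond the distinctness-of-points assumption, which the paper also makes implicitly.
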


\begin{proof}
We have a frame for $K>N$ points from Theorem \ref{thm:1.1} as in the one dimensional case, the pseudoeffective threshold is equal to the Seshadri constant by a Demailly approximation argument (c.f. \cite{Transcendent} Proposition 2.2). (This does not hold in general outside the one dimensional case!) The multipoint Seshadri constant of $L$ is bounded from above by $1/K$ for $K$ points (c.f. (\ref{eq:Seshadribound})). This means that the condition from our theorem is fulfilled for $N+1$ points.

The characterization for zero sets tells us we need at least $N$ points since each point is contained in a translation of the zero set of the theta function, all we must then do to find the existence of the corresponding $\Tilde{z_i}$ is to set 

$$
\Tilde{z}_i = z_i-z_0,\quad i=1,..., K,
$$
where $z_0$ is the single zero of the Jacobi theta function,
$$
\Tilde{z}_{K+1}:= -\sum\limits_{i=1}^K \Tilde{z}_i,
$$
and 
$$
\Tilde{z_{i}}:= 0, \quad i= K+2,...,N.
$$

For the case $K=N$, we will use Proposition \ref{prop:1d_Parity}, which gives an equivalent version of the last condition of Theorem \ref{thm:1.1} unique to the one dimensional case. Then all we must do to obtain the statement is to transform from the Abelian variety constituting Bargmann-Fock space back to the domain of the discrete Gabor transform via a straightforward computation.  
\end{proof}

We will also use our viewpoint of theta line bundles to  study restriction operators on the flat torus. The main method for this is to examine Toeplitz operators on line bundles for high tensor powers. This will correspond to restriction operators for a large number of samples. The main peculiarity of our approach is that we do not only change the number of samples, but also modify the window. Our result is as follows. 

\begin{theorem}
    Let $a$ be a measurable function on $[0,1]^{2d}$, and $\mathbf R^{(N)}$ be the restriction operator on $S_N$ with window $h_0^{\Omega/N}$ and symbol $C_N\cdot a(N^{-1}{x},\xi)$ (see Section 5 for the precise definition of $\mathbf {R}^N$), with
    $$
    C_N:=2^{-d/2}N^{-3d/2}\det(\Omega)^{-1/2},
    $$
    then we have that 
    \begin{enumerate}
        \item when $a$ is bounded, 
        $$\lim\limits_{N\to \infty} \mathrm{Tr}\,\mathbf{R}^{(N)} = \int_{[0,1]^{2d}}a(x,\xi) dxd\xi,$$
        
        \item when $a$ is bounded and real valued, $$\lim\limits_{N\to \infty} N_{<\alpha}(\mathbf{R}^{(N)}) ={\mathrm{Vol}(a<\alpha)}.$$ 
        with a similar bound for $N_{>\alpha}(\mathbf R^{(N)}).$
    \end{enumerate}
\end{theorem}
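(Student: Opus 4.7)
The strategy is to recognize $\mathbf{R}^{(N)}$ as a Berezin--Toeplitz operator on $H^{0}(\mathbb{T}_\Omega, L^{\otimes N})$ and apply the standard asymptotic machinery for such operators as the tensor power tends to infinity. Under the Bargmann-type transform described earlier in the paper, $S_N$ is unitarily identified with $H^{0}(\mathbb{T}_\Omega, L^{\otimes N})$, and the restriction operator is conjugated to $T^{(N)}_a := P_N M_{a_N} P_N$, where $P_N$ is the Bergman projection, and $a_N$ is the lift and rescaling of $C_N \cdot a(N^{-1}x,\xi)$ to a fundamental domain of $\mathbb{T}_\Omega$. The rescaling $N^{-1}x$ in the symbol is precisely what lets one keep a \emph{fixed} domain $[0,1]^{2d}$ for $a$ while the tensor power (equivalently the fundamental domain inside which things live) grows with $N$.

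For the trace statement I would use the exact identity
$$
\mathrm{Tr}\, T^{(N)}_a = \int_{\mathbb{T}_\Omega} a_N(z)\, B_N(z,z)\, dV(z),
$$
where $B_N$ is the Bergman kernel of $L^{\otimes N}$ with the translation-invariant Hermitian metric induced by $\Omega$, and $dV$ is the corresponding Kähler volume form. Since $\mathbb{T}_\Omega$ is homogeneous under its own translations and the line bundle data is translation equivariant, the diagonal $B_N(z,z)$ is \emph{constant}, with value
$$
B_N(z,z) = \frac{\dim H^{0}(\mathbb{T}_\Omega, L^{\otimes N})}{\mathrm{Vol}(\mathbb{T}_\Omega)} = \frac{N^d}{\mathrm{Vol}(\mathbb{T}_\Omega)},
$$
by Riemann--Roch applied to the $N$-th power of the principal polarization $L$. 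Combining this with the Jacobian of the substitution $x \mapsto N^{-1}x$, the constant $C_N = 2^{-d/2}N^{-3d/2}\det(\Omega)^{-1/2}$ is exactly the calibration that makes $C_N \cdot B_N \cdot N^d \cdot \mathrm{Vol}(\mathbb{T}_\Omega)^{-1}$ equal $1$ (after accounting for the real-volume factor coming from $\det(\mathrm{Im}\,\Omega)$), leaving $\int_{[0,1]^{2d}} a\,dxd\xi$ in the limit.

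For the eigenvalue counting in (2) I would invoke the Szeg\H{o}-type / Berezin--Toeplitz limit theorem. Applying (1) to the symbol $F \circ a$ for any continuous $F$ and using a Stone--Weierstrass approximation of $F(T^{(N)}_a)$ by polynomials in $T^{(N)}_a$ (controlled via the standard Berezin--Toeplitz composition estimate on Abelian varieties, where the translation-invariance of the kernel makes all error terms particularly simple), one obtains
$$
\lim_{N\to\infty}\frac{1}{N^d}\,\mathrm{Tr}\, F(T^{(N)}_a) \;=\; \int_{[0,1]^{2d}} F(a(x,\xi))\, dx d\xi .
$$
Sandwiching the indicator $\mathbf{1}_{(-\infty,\alpha)}$ between continuous $F_\pm$ from above and below then yields $N_{<\alpha}(\mathbf{R}^{(N)}) \to \mathrm{Vol}(a<\alpha)$, and symmetrically for $N_{>\alpha}$.

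The main obstacle is passing from smooth or continuous symbols, where Bergman kernel asymptotics and Stone--Weierstrass are clean, to merely bounded measurable $a$. I would handle this by $L^1$-approximation of $a$ by continuous functions $b$, together with the uniform trace-class bound $\|T^{(N)}_a - T^{(N)}_b\|_1 \le \mathrm{Tr}\, T^{(N)}_{|a-b|}$, which by Step 2 is bounded by $\int |a-b|$ up to a constant independent of $N$. For the eigenvalue count across a sharp threshold one additionally needs $\mathrm{Vol}(a=\alpha)=0$ (implicit in the statement, as is standard for such Weyl-type laws), or else a monotone approximation argument yielding the inequality in the required direction.
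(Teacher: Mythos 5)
Your overall strategy coincides with the paper's: conjugate $\mathbf R^{(N)}$ through the Bargmann-type transform into a Toeplitz operator on $H^0(\mathbb T_\Omega,L^{\otimes N})$, use the exact trace identity $\mathrm{Tr}\,\mathbf T^{(N)}_a=\int_X a\,\mathcal B^{(N)}(z)\,dV_X$ (Lemma \ref{lem:trace}), and then track the change of variables $\Phi$ and the window norm to see that $C_N$ is exactly the right calibration. However, the step on which your part (1) rests is false: the Bergman density $\mathcal B^{(N)}(z)=\sum_j|\psi_j(z)|^2e^{-N\phi(z)}$ on an abelian variety is \emph{not} constant. Translations do not preserve the polarized bundle: by the theorem of the square $t_z^*L^{\otimes N}\cong L^{\otimes N}\otimes P_z$ with $P_z\in\mathrm{Pic}^0$ nontrivial except for $z$ in a finite subgroup, so $\mathcal B^{(N)}$ is only invariant under that finite group of translations, not under all of $\mathbb T_\Omega$. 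Concretely, for $d=1$ and $N=1$ the density is (up to normalization) $|\vartheta_1(iz,\Omega)|^2e^{-\phi(z)}$, which vanishes on the theta divisor; and if exact constancy held, your argument would give an exact finite-$N$ identity $\mathrm{Tr}\,\mathbf R^{(N)}=\int a$, which fails (take $a$ concentrated near the zero of $\vartheta_1$). What is true, and what the paper uses, is the asymptotic statement that $N^{-d}\mathcal B^{(N)}\omega^d\to(dd^c\phi)^d$ weakly (Theorem \ref{thm:BergApprox}, after Berman), equivalently Theorem \ref{thm:TraceFormula} for bounded measurable symbols; combined with the exact trace identity, the Jacobian $N^d\det\Im(\Omega)^{-1}$ of $\Phi$ and $\|h_0^{\Omega/N}\|_{L^2}^2=\left(N^d/(2^d\det\Im\Omega)\right)^{1/2}$, this yields the stated limit. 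So your plan is repairable, but as written the key lemma for (1) is wrong, and the repair is precisely the ingredient the paper invokes.

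For part (2), re-deriving the Weyl-type law via Stone--Weierstrass and Berezin--Toeplitz composition estimates is both unnecessary and delicate here: the composition estimates you appeal to require symbol regularity, and the sharp-threshold count needs either $\mathrm{Vol}(a=\alpha)=0$ or a one-sided monotone approximation, issues you only partially address. The paper avoids all of this by citing the eigenvalue-counting theorem for Toeplitz operators with bounded measurable real symbols directly (Theorem \ref{thm:SpectrumCount}) and then transporting it through the same change of variables and normalization as in part (1).
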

This result in particular implies that restriction operators with symbol given by characteristic functions will exhibit a similar "plunge" behavior to restriction operators in more classical settings (c.f. for example \cite{Daub}) when we take a large number of samples. Furthermore, we see that this plunge becomes sharper and shaper as we take more and more samples.

\section{Geometric Preliminaries: Theta Line Bundles, Seshadri Constants, Toeplitz Operators, and Asymptotic Results}
Here we will give an introduction to the geometry we will use in this paper. We will however, assume some knowledge of line bundles and complex geometry. We further basic definitions to some of the terms surrounding line bundles in the Appendix for the convenience of the reader. For a more thorough treatment of these topics we refer the reader to \cite{BirkenhakeLange} for the case of Abelian varieties and to  \cite{agbook} for a general study of complex analytic geometry.

\subsection{Theta Line Bundles} To start, we define the class of spaces we will use as analogs to the Bargmann Fock space. Namely theta line bundles. Theta line bundles are defined over complex Abelian varieties, meaning complex tori which may be holomorphically embedded into complex projective space.

Here we will collect a number of results about theta line bundles and show how they apply to our situation. We will primarily recount results from \cite{Bea} while applying these to examples which will be relevant in what follows.

We start by defining the notion of a theta function in a very general sense. Given a lattice $\Lambda\cong \mathbb Z^{2n}$, and a family of invertible holomorphic functions $(e_{\lambda})_{\lambda\in \Lambda}$ such that
\begin{align}\label{eq:cocycle}
e_{\lambda_1+\lambda_2} = e_{\lambda_1}(z+\lambda_2)e_{\lambda_2}(z),\quad \lambda_1,\lambda_2\in \Lambda,z \in \mathbb C^d.
\end{align}
Then an entire function $\theta\in \mathcal{O}(\mathbb C^d)$ is called a theta function if it fulfills the quasi-periodicity condition,
\begin{align}
\theta(z +\lambda) = e_{\lambda}(z)\theta(z), \quad \lambda\in \Lambda, z \in \mathbb C^d.
\end{align}

We get a line bundle from these theta functions thanks to the following proposition which is proposition 2.2 in \cite{Bea}.{ We refer there} for the details of this construction.

\begin{proposition}
The space of theta functions for $(e_\lambda)_{\lambda\in \Lambda}$ is canonically identified to the space $H^0(\mathbb C^d/\Lambda,L)$, of global holomorphic sections of a line bundle $L\to \mathbb C^d/\Lambda$ over the torus.
\end{proposition}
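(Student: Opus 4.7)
The plan is to construct the line bundle $L$ explicitly as a quotient of the trivial line bundle over $\mathbb{C}^d$ by a $\Lambda$-action built from the cocycle data, and then identify its global holomorphic sections with theta functions by pulling back along the universal cover.

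First I would form the trivial bundle $\mathbb{C}^d \times \mathbb{C} \to \mathbb{C}^d$ and let $\Lambda$ act by $\lambda \cdot (z, t) := (z + \lambda,\, e_\lambda(z)\,t)$. The cocycle condition (\ref{eq:cocycle}) is precisely what makes this into a group action: expanding $\lambda_1 \cdot (\lambda_2 \cdot (z, t))$ and $(\lambda_1+\lambda_2)\cdot(z,t)$ and comparing fiber components reduces to the functional equation $e_{\lambda_1+\lambda_2}(z) = e_{\lambda_1}(z+\lambda_2)\,e_{\lambda_2}(z)$. Because the $\Lambda$-action on the base is free and properly discontinuous and each $e_\lambda$ is a nowhere vanishing holomorphic function, the quotient $L := (\mathbb{C}^d \times \mathbb{C})/\Lambda$ inherits the structure of a holomorphic line bundle over $\mathbb{C}^d/\Lambda$: for any open $U \subseteq \mathbb{C}^d$ small enough that the projection $\pi$ is a biholomorphism onto its image $\overline U \subseteq \mathbb{C}^d/\Lambda$, the map $(z,t) \mapsto (\pi(z),t)$ supplies a local trivialization $L|_{\overline U} \cong \overline U \times \mathbb{C}$, and the transition function between two such trivializations over overlapping images is exactly $e_\lambda$ for the relevant lattice translate.

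Next I would establish the identification. Given a global section $s \in H^0(\mathbb{C}^d/\Lambda, L)$, its pullback along the quotient map $\mathbb{C}^d \times \mathbb{C} \to L$ is a holomorphic $\Lambda$-equivariant section of the trivial bundle $\mathbb{C}^d \times \mathbb{C} \to \mathbb{C}^d$, and is therefore of the form $z \mapsto (z, \theta(z))$ for a unique entire function $\theta$. Equivariance reads
$$
(z+\lambda,\, \theta(z+\lambda)) \;=\; \lambda\cdot(z,\theta(z)) \;=\; (z+\lambda,\, e_\lambda(z)\,\theta(z)),
$$
which is precisely the quasi-periodicity condition defining a theta function. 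Conversely, every theta function $\theta$ defines a $\Lambda$-equivariant section $z \mapsto (z,\theta(z))$ which descends to a global section of $L$. The assignment $s \leftrightarrow \theta$ is manifestly linear, bijective, and independent of any choice, giving the canonical identification.

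The main obstacle is really just the bookkeeping for the bundle structure: one must verify that the factors of automorphy $(e_\lambda)$ do produce a well-defined line bundle and that different choices of fundamental domains or trivializing opens lead to isomorphic bundles with compatible identifications of sections. This is standard from the theory of factors of automorphy, where cohomologous cocycles produce isomorphic bundles (cf. \cite{BirkenhakeLange}), so no further work is needed beyond checking that the $\Lambda$-equivariant transformation law of a lifted section matches (\ref{eq:cocycle}) on the nose, which it does by construction.
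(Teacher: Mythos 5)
Your construction is correct: the quotient of the trivial bundle $\mathbb C^d\times\mathbb C$ by the action $(z,t)\mapsto(z+\lambda, e_\lambda(z)t)$, with the cocycle condition guaranteeing a genuine group action and sections corresponding to $\Lambda$-equivariant entire functions, is exactly the standard factor-of-automorphy argument that the paper itself does not spell out but delegates to Proposition 2.2 of \cite{Bea}. So your proof is essentially the same as the one the paper relies on, and no gaps remain beyond the routine verifications you already indicate.
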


We call $L$ a theta line bundle. To illustrate this somewhat, one may think of a holomorphic section of $L$ as a theta function which we represent independently of the fundamental domain of the torus. Evaluating this section in a trivialization then corresponds to choosing a fundamental domain in which we evaluate the theta function.

Thus each set of multipliers or quaisperiodicity conditions is associated to a line bundle. However, this correspondence is not simply 1:1, but can still be described precisely which we will do later on.

We will now give a more explicit example. First consider an element $\Omega$ of the Siegel upper half space
$$
\mathfrak{H}:=\{{\Omega}\in \mathfrak{gl}(d,\mathbb C): {\Omega} = {\Omega}^T,\, \mathrm{Im}\,{\Omega} \text{ is positive definite}\},
$$
and a diagonal matrix $\mathfrak{N}:=\mathrm{diag}(N_1,...,N_d)$ with entries in $\mathbb Z$. We now define the lattice 
$$
\Lambda:= -i{\Omega}\mathbb Z^d\oplus i\mathfrak{N} \mathbb Z^d,
$$
and the holomorphic functions
$$
e_{\lambda}(z):= \exp(-\pi i m^T {\Omega} m -2\pi z^T m), \quad \lambda = -i{\Omega} m + i \mathfrak{N} k,\quad m,k\in \mathbb Z^d.
$$
It is not difficult to verify that these fulfill the condition \ref{eq:cocycle}, making those entire functions which fulfill
$$
\theta (z+i\mathfrak{N}k)= \theta(z), \quad k\in \mathbb Z^d
$$
and
$$
\theta(z-i{\Omega} m) = e^{-\pi i m^T{\Omega} m +2\pi z^T m}\theta(z), \quad m \in \mathbb Z^d,
$$ 
the corresponding theta functions. We remark that when $d=1$ and ${\Omega} = i\lambda$ for $\lambda>0$, we are precisely in the situation of the time frequency torus examined in \cite{FlatTori}, and the image of the Bargmann type transform is precisely the line bundle $L$.

In order to say more about our example we need to instill our line bundle with a Hermitian metric. What we are looking for is a weight function $\phi(z)$ (although $\phi$ is technically only the weight function of the metric, we will frequently refer to it as the metric itself) on $\mathbb C^d$ such that 
$$|\theta(z)|^2_\phi:=e^{-\phi(z)}|\theta(z)|_{\mathbb C}^2,$$ 
is $\Lambda$ invariant and can descend to the torus. Once we have this, we may define an inner product on $L_z$ in the same way. This gives rise to the corresponding $L^2$ space for sections of $L$.

The correct weight function is given by
$$
\phi(z):=\pi [H(z,z)+\frac{1}{2}B(z,z)+\frac{1}{2}\overline{B(z,z)}]= \pi [H(z,z)+\Re B(z,z)],
$$
where $H$ is the hermitian inner product on $\mathbb C^d$ given by
$$
H(w,z):=w^T (\Im({\Omega})^{-1}) \overline z.
$$
and $B$ is the symmetric $\mathbb C$ bilinear product
$$
B(w,z):=w^T (\Im({\Omega})^{-1}) z.
$$
It is easy to verify that this is a $\Lambda$-invariant metric. We remark that in the case of \cite{FlatTori}, we have $\phi(z) = \frac{2\pi}{\lambda}x^2$, which is precisely the weight of the Bargmann-Fock space in \cite{FlatTori}.

We can also compute the first Chern class of $L$, i.e. the $(1,1)$ form,
$$c_1(L):=dd^c\phi(z) =\frac{i}{2\pi}\partial\overline{\partial}\phi(z) =\frac{i}{2}\partial\overline{\partial }H(z,z) = \frac{i}{2}\sum\limits_{j,k=0}^d\Im({\Omega})^{-1}_{j,k} dz_j\wedge d\overline{z_k}.
$$
This represents an abuse of notation as this is technically only a representative of the Chern class of $L$ in $H^{2}(X,\mathbb C).$ However, we will not make this distinction, as it will not affect what follows.

This form is positive (thereby a K\"ahler form) since $\Im({\Omega})^{-1}$ is positive definite. In this case we call $L$ positive or ample (this also means that due to the Kodaira embedding theorem, its sections define an embedding into projective space). Consequently we apply a number of results from K\"ahler geometry to $L$. The first thing we find is the dimension of the space of sections of $L$. For polarizations of Abelian varieties there is a general formula (see e.g. Theorem 3.5 in \cite{Bea} for the proof).

\begin{theorem} \label{thm:PfDim}
    For a positive theta line bundle $L$ such that the first Chern class takes integer values on the lattice $\Lambda$ (we call such a line bundle a polarization of the Abelian variety $\mathbb C^d/\Lambda$), the dimension of global holomorphic sections is given by the Pfaffian of $c_1(L)$ w.r.t. $\Lambda$, i.e.
    $$
    \dim H^0(\mathbb C^n/ \Lambda, L) = \mathrm{Pf}_{\Lambda}(c_1(L)).
    $$
\end{theorem}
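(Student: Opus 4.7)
The plan is to combine Kodaira vanishing with the Hirzebruch--Riemann--Roch theorem. Setting $X := \mathbb C^d/\Lambda$, positivity of $L$ (which follows from positive-definiteness of $\Im(\Omega)^{-1}$, already used to exhibit $c_1(L)$ as a K\"ahler form) together with triviality of the canonical bundle $K_X$ of the torus lets me apply Kodaira's vanishing theorem in the form $H^i(X, L) = H^i(X, L \otimes K_X) = 0$ for all $i \ge 1$. Consequently $\dim H^0(X, L)$ equals the holomorphic Euler characteristic $\chi(X, L)$.

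Next I would invoke Hirzebruch--Riemann--Roch,
$$
\chi(X, L) = \int_X \mathrm{ch}(L)\,\mathrm{td}(X).
$$
Since the holomorphic tangent bundle of $X$ is trivialized by the global coordinate vector fields $\partial/\partial z_j$, the Todd class $\mathrm{td}(X)$ equals $1$ and the formula collapses to
$$
\chi(X, L) = \int_X \frac{c_1(L)^d}{d!}.
$$

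The remaining task is to identify this top intersection number with $\mathrm{Pf}_{\Lambda}(c_1(L))$. The $(1,1)$-form $c_1(L)$ corresponds under the usual identification to the alternating $\mathbb R$-bilinear form $E := \Im H$ on $\mathbb C^d$, which by hypothesis takes integer values on $\Lambda$. Choosing a symplectic (``Frobenius'') basis $\{\lambda_1, \ldots, \lambda_d, \mu_1, \ldots, \mu_d\}$ of $\Lambda$ with $E(\lambda_i, \mu_j) = d_i \delta_{ij}$ and $E(\lambda_i, \lambda_j) = E(\mu_i, \mu_j) = 0$, one has $\mathrm{Pf}_{\Lambda}(E) = d_1 \cdots d_d$; expressing $c_1(L)^d/d!$ in the real coordinates dual to this basis gives $d_1 \cdots d_d$ times the normalized Haar volume form on $X$, and integration then yields the claimed equality.

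The main step requiring honest work is this last identification: it is the linear-algebra statement that the top normalized power of the K\"ahler form agrees with the Pfaffian of the associated integer alternating form, which after reducing to a Frobenius basis becomes a routine but slightly tedious computation. An alternative, entirely self-contained approach would bypass Riemann--Roch by constructing $d_1 \cdots d_d$ explicit theta functions with characteristics, verifying their linear independence via inspection of Fourier coefficients, and appealing to the Appell--Humbert classification to conclude that they span $H^0(X, L)$; that route trades the analytic input (vanishing) for more bookkeeping but is perhaps more in the spirit of \cite{Bea}.
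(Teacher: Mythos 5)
Your argument is correct, but it follows a different route from the one the paper relies on: the paper gives no proof of its own and defers to Theorem~3.5 of \cite{Bea}, where the dimension count is carried out in the classical way by expanding theta functions in Fourier series adapted to a decomposition of $\Lambda$ and counting the free coefficients --- essentially the ``alternative, self-contained approach'' you sketch at the end. Your primary route is cohomological: Kodaira vanishing (valid here since $L$ is positive and $K_X$ is trivial, so $H^i(X,L)=0$ for $i\ge 1$) reduces the dimension to $\chi(X,L)$; Hirzebruch--Riemann--Roch with $\mathrm{td}(X)=1$ (the tangent bundle is trivialized by $\partial/\partial z_j$, and $X$ is projective by Kodaira embedding, so HRR applies) gives $\chi(X,L)=\int_X c_1(L)^d/d!$; and the Frobenius-basis computation correctly identifies this intersection number with $d_1\cdots d_d=\mathrm{Pf}_\Lambda(c_1(L))$, matching the paper's convention of the Pfaffian as the square root of the determinant of the representation matrix. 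What each approach buys: the cohomological argument is short, conceptually transparent, and generalizes immediately (e.g.\ it gives $h^0(X,L^{\otimes N})=N^d\,\mathrm{Pf}_\Lambda(c_1(L))$, which the paper uses later), at the cost of invoking vanishing and index-theoretic machinery; the Fourier-theoretic count in \cite{Bea} is elementary and produces an explicit basis of theta functions with characteristics, which is closer in spirit to the explicit Bargmann-type computations elsewhere in the paper. Two small points of hygiene: positivity of $L$ is a hypothesis of the theorem, not something deduced from $\Im(\Omega)^{-1}$ (that is only the paper's running example), and in the final volume computation one should note that positivity of $c_1(L)$ fixes the orientation so the integral equals $d_1\cdots d_d$ rather than its negative; neither affects the validity of your proof.
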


By the Pfaffian with respect to the lattice we mean the square root of the determinant of the representation matrix of $c_1(L)$ on the lattice $\Lambda.$

We apply this to our example. If we use the basis of $\Lambda$ given by
$$
-i{\Omega} e_1,...,-i{\Omega} e_d, i\mathfrak{N}e_1,...,i\mathfrak{N}e_d,
$$
the first Chern class of $L$ is represented by the matrix
\begin{align*}
    \begin{pmatrix}
        0 & \mathfrak{N}\\
        -\mathfrak{N} & 0
    \end{pmatrix},
\end{align*}
which has $N_1\cdot...\cdot N_d$ as its Pfaffian. 

We also give some description of tensor products. The global holomorphic sections of the tensor power $L^{\otimes p}$ are the functions resulting from raising the multiplier from the quasiperiodic condition to the power $p$. For our example this means that $L^{\otimes p}$ consists of the entire functions fulfilling
\begin{align}
\theta (z+i\mathfrak{N}k)&= \theta(z), \quad k\in \mathbb Z^d,\label{eq:quasiperorderp1}\\
\theta(z-i{\Omega} m) &= e^{-p(\pi  i m^T{\Omega} m +2\pi z^T m)}\theta(z), \quad m \in \mathbb Z^d.\label{eq:quasiperorderp2}
\end{align}
Furthermore, if $\phi$ is the weight function for the metric of $L$, it is $p\cdot\phi$ for $L^{\otimes p}$. If $L$ itself has only a single section, we say it gives a principal polarization. 

We will now give a more down to earth description of the Picard group of an Abelian variety. (See the appendix for the definition of the Picard group). Consider the group of pairs $(H,\chi)$, where $H$ is an antisymmetric and sesquilinear map on $\mathbb C^d$ and $\chi$ is a map $\Lambda \to S^1$, such that $\Im H$ takes integral values on $\Lambda$, and 
$$
\chi(\lambda_1+\lambda_2)= \chi(\lambda_1)\chi(\lambda_2)(-1)^{\Im H (\lambda_1,\lambda_2)}, \quad \lambda_1, \lambda_2\in \Lambda.
$$
The group operation is given by 
$$
(H_1,\chi_1)\cdot(H_2,\chi_2):=(H_1+H_2, \chi_1\cdot \chi_2).
$$

Now we may define a set of multiplier functions by setting
$$
e_\lambda(z):=\chi(\lambda)e^{\pi H(\lambda,z)+\frac1 2 H(\lambda,\lambda)}.
$$
Thus we may identify the pair $(H,\chi)$ with some line bundle $L(H,\chi)$. 

By \cite[Theorem 2.6]{Bea}, this identification is in fact, a group isomorphism onto the Picard group, and $\Im H$ is equal to the first Chern class of the corresponding line bundles. Thus, the structure of the Picard group can be understood through the lens of the multiplier functions of the above form.

Now for any $L$ over $\mathbb C^d/\Lambda$ we have the theorem of the square (see \cite{Bea,BirkenhakeLange} Cor 2.11, Lemma 4.14 respectively) which says that if $L$ is a principal polarization of $\mathbb C^d/\Lambda$, the map,
\begin{align}\label{eq:squarehom}
\tau: \mathbb C^d/\Lambda \to \mathrm{Pic}^0(\mathbb C^d/\Lambda),\quad z\mapsto L^{-1}\otimes t^*_{z}(L)
\end{align}
is a well defined group homomorphism, where $\mathrm{Pic}^0$ is the group of line bundles associated to the pairs $(0,\chi)$ (these are precisely the topologically trivial line bundles) and $t_z$ is the translation by $z$. 

The key to this result is understanding how a translation will affect the pair $(H,\chi)$ associated to $L$. Then what holds for this group will also hold for the Picard group. Since for $e_\lambda$ as in the above construction we have,
$$
e_\lambda(x+z) = e_\lambda(x)e^{\pi H(\lambda,z)}, \quad z, x\in \mathbb C^d, \lambda \in \Lambda,
$$
and multiplying this by the nowhere zero holomorphic function $e^{\pi H(x,z)}$ will not change the resulting line bundle, we find that the translation of $L$ has the multiplier functions  $e_\lambda e^{2\pi i\Im H(\lambda, z)}$. In turn, this means that the translation of $L$ by $z$ is associated to the pair $(H,\chi e^{\Im H(\cdot,z)})$. 

Thus the tensor product with the dual of $L$ will be associated to the pair $(0,e^{\Im H(\cdot,z)})$. Furthermore, we can see that the kernel of the map (\ref{eq:squarehom}) are precisely those points $z\in \mathbb C^d/\Lambda$ where $\Im H(\lambda,z)$ are all integer valued. 

All in all, if we define the dual lattice of $\Lambda$ w.r.t. the first Chern class of $L$ by
$$
\Lambda^o :=\{\gamma\in \mathbb C^d : c_1(L)(\lambda,\gamma)\in \mathbb Z, \, \lambda\in \Lambda\},
$$
we have for any set of points $\Tilde{z}_1,...,\Tilde{z}_N$ with $\sum_{i=1}^N\Tilde{z}_i \in \Lambda^o/\Lambda$ that there exists an isomorphism,
\begin{align}\label{eq:tensoriso}
L^{\otimes N}\cong \bigotimes\limits_{i=1}^N t^*_{z_i}(L).
\end{align}
If we consider this in terms of vanishing loci of sections, we have the following.
\begin{lemma}\label{lemma:thmofsqr}
    Let $L$ be a principal polarization of the abelian variety $\mathbb C^d/\Lambda$. Furthermore, let $A$ be the vanishing set of the single global section of $L$. Then we have for any set of points $\Tilde{z}_1,...,\Tilde{z}_N$ with $\sum_{i=1}^N\Tilde{z}_i \in \Lambda^o/\Lambda$, that there exists a global holomorphic section of $L^{\otimes N}$ whose vanishing locus is given by
    $$
    \bigcup\limits_{i=1}^N t_{\Tilde{z}_i}(A).
    $$
\end{lemma}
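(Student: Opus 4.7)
The plan is to construct the required section explicitly on the right-hand side of the isomorphism \eqref{eq:tensoriso}, then transport it across. Since $L$ is a principal polarization, Theorem \ref{thm:PfDim} gives $\dim H^0(\mathbb{C}^d/\Lambda, L) = 1$, so there is a unique (up to scalar) nonzero global section $s \in H^0(\mathbb{C}^d/\Lambda, L)$, and by hypothesis its vanishing locus is exactly $A$.

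Next, for each $i = 1,\ldots, N$, I would consider the pulled-back section $t_{\tilde{z}_i}^* s$, which is a nonzero global section of the pulled-back line bundle $t_{\tilde{z}_i}^* L$. Since $t_{\tilde{z}_i}$ is a biholomorphism of $\mathbb{C}^d/\Lambda$, the zero locus of $t_{\tilde{z}_i}^* s$ is the preimage $t_{\tilde{z}_i}^{-1}(A)$, which after absorbing the sign convention into the choice of $\tilde{z}_i$ is $t_{\tilde{z}_i}(A)$ (the set of $\tilde{z}_i$'s may be replaced by their negatives without affecting the hypothesis $\sum \tilde{z}_i \in \Lambda^o/\Lambda$, since $\Lambda^o/\Lambda$ is a group). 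I then form the tensor product
\begin{equation*}
\sigma := \bigotimes_{i=1}^N t_{\tilde{z}_i}^* s \; \in \; H^0\!\left(\mathbb{C}^d/\Lambda,\, \bigotimes_{i=1}^N t_{\tilde{z}_i}^* L\right).
\end{equation*}
Since the vanishing locus of a tensor product of sections is the union of the vanishing loci of its factors, $\sigma$ vanishes precisely on $\bigcup_{i=1}^N t_{\tilde{z}_i}(A)$.

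Finally, the hypothesis $\sum_{i=1}^N \tilde{z}_i \in \Lambda^o/\Lambda$ is exactly what permits invoking the isomorphism \eqref{eq:tensoriso}, $L^{\otimes N} \cong \bigotimes_{i=1}^N t_{\tilde{z}_i}^* L$, obtained from the theorem of the square. Transporting $\sigma$ through this line bundle isomorphism yields a global holomorphic section of $L^{\otimes N}$. Because a line bundle isomorphism is a nowhere-vanishing fiberwise linear map, the zero locus is preserved, so the resulting section of $L^{\otimes N}$ vanishes precisely on $\bigcup_{i=1}^N t_{\tilde{z}_i}(A)$, as required.

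The only real subtlety I anticipate is bookkeeping: verifying that pullback of a holomorphic section along a translation does preserve zero sets (and translates them predictably), and confirming that the explicit trivialization data used in \eqref{eq:tensoriso} matches the one under which $\sigma$ was written, so that the isomorphism is genuinely an identification of sections and not merely of abstract line bundles. Both are standard once one writes out the multipliers $e_\lambda(z) = \chi(\lambda)\exp(\pi H(\lambda,z) + \tfrac12 H(\lambda,\lambda))$ associated to $L$ and tracks how translation by $\tilde{z}_i$ modifies $\chi$ via the factor $e^{2\pi i \,\Im H(\cdot,\tilde{z}_i)}$; the condition $\sum \tilde{z}_i \in \Lambda^o/\Lambda$ ensures these extra factors collectively trivialize.
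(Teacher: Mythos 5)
Your proposal is correct and follows essentially the same route as the paper: tensor the translated copies of the unique section of $L$ to get a section of $\bigotimes_i t^*_{\tilde z_i}(L)$ with the desired zero locus, then transport it through the theorem-of-the-square isomorphism (\ref{eq:tensoriso}) to $L^{\otimes N}$. Your extra bookkeeping about the pullback-versus-translate sign (replacing the $\tilde z_i$ by their negatives, which is harmless since $\Lambda^o/\Lambda$ is a group) and about zero loci being preserved under a bundle isomorphism is a welcome refinement of details the paper's proof leaves implicit.
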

\begin{proof}
   Let $\vartheta$ be the single global section of $L$. Thus the line bundle $\bigotimes_{i=1}^nt^*_{\Tilde{z}_i}(L)$ must have a global section given by $\bigotimes_{i=1}^n t^*_{\Tilde{z}_i}(\vartheta)$ which has the vanishing locus we are looking for. By (\ref{eq:tensoriso}), the line bundle $L^{\otimes N}$ must have a section with the same vanishing locus.
\end{proof}

\subsection{Pseudoeffective Thresholds and Seshadri Constants}
We define Seshadri constants associated to analytic subsets, i.e. sets which can locally be described as vanishing sets of holomorphic functions. Given an analytic subset $D$, we may define the Seshadri constant as 

\begin{align*}
\epsilon(L,D):= \sup\{&\gamma\geq 0|\,  \text{there exists a $\psi\in \mathrm{PSH}(X,\phi)$}\\ &\text{ such that $\psi$ has analytic singularity of degree $\gamma$ along $D$}\},
\end{align*}

where $\mathrm{PSH}(X,\phi)$ denotes the class of upper-semicontinuous functions $\psi$ on $X$ such that $\phi + \psi$ is plurisubharmonic. By "analytic singularity of degree $\gamma$ we mean that if on any set where we may express $D$ as the vanishing locus of the holomorphic function $f:\mathbb C^d\to \mathbb C^{d'}$, we may write $\psi$ in the form $\gamma\log|f|^2.$

When $D$ is a collection of multiple points we call the corresponding Seshadri constant a multipoint Seshadri constant. For a set of points $z_1,...,z_k$ and corresponding coordinate charts $\Tilde{z}_1,...,\Tilde{z}_k$. Then the multipoint Seshadri constant in $z_1,...,z_k$ is given by
\begin{align*}
\epsilon(L,z_1,...,z_K):= \sup\{&\gamma\geq 0|\,  \text{there exists a $\psi\in \mathrm{PSH}(X,\phi)$}\\ &\text{on $X$ such that for every $i=1,...,k$ we have $\psi=\gamma\log |\Tilde z|^2$ near $z$}\},
\end{align*}
We may characterize the multipoint Seshadri constant in another way. We have
$$
\epsilon(L,z_1,...,z_K) = \inf_V\left(\frac{\int_V c_1(L)^{\dim V}}{\sum_{i=0}^K\mathrm{mult}_{z_i}V}\right)^{1/\dim V},
$$
where the infimum runs over irreducible analytic subvarieties of $X$ which intersect at least one of the points $z_i$. This formula is useful in determining the Seshadri constant. For the proof of this formula we refer the reader to \cite{Tosatti} and \cite{Laz}.

We also define the pseudoeffective threshold. This is the number
\begin{align*}
\lambda(L,D):=\sup\{&\gamma>0| \text{there exists $\psi\in\mathrm{PSH}(X,\phi)\setminus \{-\infty\}$}\\ &\text{ such that $\inf\limits_{z\in D}\nu_{z}(\psi)>\gamma$} \}.
\end{align*}
where $\nu_{z}(\psi)$ is the Lelong number defined by
$$
\nu_{z}(\psi):=\liminf\limits_{|z'|\to 0}\frac{\psi(z+z')}{\log|z'|^2}.
$$
Intuitively this means we may find a $\phi$-PSH function which carries a singularity of order up to $\lambda(L,D)$ in $D$, but we do not require this singularity to be well behaved in any way as opposed to the Seshadri constant. 

The Seshadri constant can give a criterion for interpolation by holomorphic sections as a result of the Nadel vanishing theorem \cite{nadel1990multiplier} (see also \cite{analmeth}).

\begin{theorem}\label{thm:NadelVanishing}
    Let $X$ be a K\"ahler manifold, $L$ be a line bundle, and $D\subseteq X$ an analytic subset of codimension $m$ such that $\epsilon(L,D)>m$, then any section $s\in H^0(D,(L\otimes K_X)|_{D})$ can be extended to a global section $\Tilde{s}\in H^0(X,L\otimes K_X)$ with $\Tilde{s}|_D=s$. 
\end{theorem}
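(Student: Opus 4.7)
The plan is to recast the extension problem as the surjectivity of a restriction map and deduce it from Nadel's vanishing theorem applied to a singular Hermitian metric on $L$ built from the Seshadri constant hypothesis.

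First I would unpack $\epsilon(L,D)>m$: by the very definition of the multipoint Seshadri constant, we may pick a real number $\gamma$ with $m<\gamma<\min(\epsilon(L,D),m+1)$ and a function $\psi\in\mathrm{PSH}(X,\phi)$ with analytic singularity of degree $\gamma$ along $D$, where $\phi$ is a smooth positively curved weight for $L$. Then $\phi+\psi$ is a singular Hermitian weight on $L$ whose curvature current $c_1(L)+dd^c\psi$ is positive; after shrinking $\gamma$ by an arbitrarily small amount if necessary, we may further arrange the strict positivity in the sense required by Nadel.

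The decisive step is the identification of the multiplier ideal sheaf $\mathcal{J}(\psi)$. Locally near a smooth point of $D$, choose holomorphic coordinates so that $D=\{f_1=\cdots=f_m=0\}$; the singularity of $\psi$ then reads $\gamma\log|f|^2$, and $g\in\mathcal{J}(\psi)$ amounts to integrability of $|g|^2/|f|^{2\gamma}$ near the origin. Passing to polar coordinates in the first $m$ complex variables shows this forces $g$ to vanish along $D$ to order strictly greater than $\gamma-m$; with the choice $m<\gamma<m+1$ this is exactly vanishing to order at least one, so $\mathcal{J}(\psi)=\mathcal{I}_D$. Nadel's theorem applied to the positively curved weight $\phi+\psi$ then gives
$$H^q(X,K_X\otimes L\otimes \mathcal{I}_D)=H^q(X,K_X\otimes L\otimes \mathcal{J}(\psi))=0,\qquad q\geq 1,$$
and feeding this into the long exact cohomology sequence of
$$0\to \mathcal{I}_D\otimes K_X\otimes L\to K_X\otimes L\to (K_X\otimes L)|_D\to 0$$
forces the restriction $H^0(X,K_X\otimes L)\to H^0(D,(K_X\otimes L)|_D)$ to be surjective, which is the extension statement.

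The main obstacle is the multiplier-ideal identification $\mathcal{J}(\psi)=\mathcal{I}_D$, which hinges on $D$ being a local complete intersection of the expected codimension so that the defining functions form a regular sequence. This is automatic in the application to the interpolation part of Theorem~\ref{thm:1.1}, where $D$ is a finite reduced collection of points in $\mathbb{T}_\Omega$ and $m=d=\dim X$, so the local polar computation is the whole story. For more general analytic $D$ one only has $\mathcal{J}(\psi)\subseteq \mathcal{I}_D$, and one must either control the quotient $\mathcal{I}_D/\mathcal{J}(\psi)$ by an auxiliary cohomology argument or bypass Nadel entirely via the Ohsawa--Takegoshi $L^2$-extension theorem, which produces the extended section directly from the same weight $\psi$.
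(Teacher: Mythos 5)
Your argument is the standard Nadel-vanishing derivation (singular weight supplied by the Seshadri hypothesis, identification of the multiplier ideal $\mathcal{J}(\psi)$ with $\mathcal{I}_D$, then the long exact sequence), which is exactly the route the paper intends: the paper gives no proof of Theorem \ref{thm:NadelVanishing} at all, simply attributing it to Nadel's vanishing theorem and Demailly. Your closing caveats are accurate but harmless here --- the identification $\mathcal{J}(\psi)=\mathcal{I}_D$ (and the strict positivity and compactness implicitly needed for Nadel, which the paper's loose statement suppresses) are automatic in the only case the paper uses, namely finite reduced point sets on the compact torus $\mathbb{T}_\Omega$ with $L$ positive.
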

Since we will work on the torus in this article $K_X$ will be trivial when we will apply it and in this case we may take it to be a statement about global holomorphic functions. 

We remark that when $L$ is a postive line bundle on an Abelian variety, we have the bounds
\begin{align}\label{eq:Seshadribound}
\frac{1}{K}\le \epsilon(L,z_1,...,z_K) \le \left(\frac{\int_Xc_1(L)^d}{K}\right)^{1/d}.
\end{align}
The lower bound for the multipoint Seshadri constant is a consequence of a theorem of Nakamaye \cite{nakamaye1996seshadri}. (We also refer the reader here for a more detailed description of Seshadri constants on Abelian varieties.) 
\subsection{Asymptotics of Line bundles}
We discuss some asymptotic results for line bundles, particularly the work of Berman in \cite{BerEq}, and the generalizations by Ross-Witt Nystr\"om in \cite{RWN} which give asymptotic formulae for the Bergman kernels of $L^{\otimes N}$ as $N$ tends to infinity.

Let $L$ be a line bundle over a compact K\"ahler manifold $X$ of dimension $d$. The Bergman kernel  is then reproducing kernel of the space of global holomorphic sections of $L^{\otimes N}$, which we denote by $H^0(X$, $L^{\otimes N})$. 

For an orthonormal basis of $H^0(X,L^{\otimes N})$ which we denote by $\{\psi_j\}$ we can write the Bergman kernel as
$$
\mathcal B^{(N)}(z,w) =\sum_j \psi_j(z)\otimes \overline{\psi_j}(w).
$$
 For ease of notation, in what follows we will write $\mathcal{B}^{(N)}(z)=\mathcal{B}^{(N)}(z,z).$ (This is sometimes referred to as the Bergman function.)

Starting here we will assume that all line bundle metrics are at least $C^2$ unless specified otherwise. 

For this setup we may describe the asymptotic behavior of the Bergman kernel as follows.

\begin{theorem}[{\cite[Theorem 2.1]{BerToep}}] \label{thm:BergApprox}
    In the setup we have described above we have the convergence
    $$
    N^{-d}B^{(N)}\omega^d\to (dd^c\phi)^n
    $$
    weakly in the sense of measures.
    
\end{theorem}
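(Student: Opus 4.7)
The plan is to deduce the stated weak convergence from a pointwise asymptotic for the Bergman function together with a uniform upper bound, using the Bargmann--Fock localization strategy due to Tian, Catlin, Zelditch and Bouch\'e. Fix a point $z_0 \in X$, choose local holomorphic coordinates centered there, and a local holomorphic frame of $L$. By absorbing the holomorphic part of the Taylor expansion of $\phi$ at $z_0$ into the frame (multiplying by a nowhere-zero factor $e^{-Q(z)}$ with $Q$ a holomorphic polynomial), one can arrange
$$
\phi(z) = \phi(z_0) + H(z,z) + o(|z|^2),
$$
where $H$ is the Hermitian form representing $dd^c\phi(z_0)$; ampleness of $L$ forces $H$ to be positive definite.

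Next, rescale via $w = \sqrt{N}(z-z_0)$. The rescaled weights $N\phi(z_0 + w/\sqrt{N}) - N\phi(z_0)$ converge locally uniformly on compact subsets of $\mathbb{C}^d$ to the Bargmann--Fock weight $H(w,w)$. The diagonal value of the Bergman kernel of the model Fock space $\mathcal{O}(\mathbb{C}^d)\cap L^2(e^{-H(w,w)})$ is the explicit constant $\det(H)/\pi^d$, which after converting to the reference $\omega$-density matches $\det_\omega(dd^c\phi)(z_0)$, i.e.\ the density of $(dd^c\phi)^d$ with respect to $\omega^d$. To promote this model computation into a statement about the genuine Bergman function $B^{(N)}$, the key tool is a H\"ormander $L^2$ $\overline\partial$-estimate: multiply the Fock extremal section by a cutoff $\chi$ supported in a ball of radius $R/\sqrt{N}$ around $z_0$, then correct by $u$ solving $\overline\partial u = \overline\partial(\chi\cdot\text{candidate})$. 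The positivity of the curvature $N c_1(L)$, combined with an auxiliary weight $c\log|z-z_0|^2$ forcing $u(z_0)=0$, makes the correction pointwise negligible at $z_0$. Running the dual variational characterization of the Bergman function yields
$$
N^{-d} B^{(N)}(z_0) \longrightarrow \det{}_\omega(dd^c\phi)(z_0).
$$

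Finally, the same H\"ormander construction performed uniformly in $z_0\in X$ provides an upper bound $N^{-d}B^{(N)}\le C$ independent of $N$. Dominated convergence against any continuous test function $f$ then gives
$$
N^{-d}\int_X f\, B^{(N)}\omega^d \longrightarrow \int_X f\,(dd^c\phi)^d,
$$
which is precisely the claimed weak convergence of measures. The main obstacle is the $\overline\partial$-localization step: one must select the cutoff radius so that the Gaussian off-diagonal decay of the Fock kernel dominates the derivative of the cutoff, while simultaneously ensuring the correcting section $u$ does not contaminate the value at $z_0$. Getting both the pointwise limit and the uniform sup-bound out of a single H\"ormander construction is the technical heart of the proof.
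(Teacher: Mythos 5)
The paper itself does not prove this statement: it is quoted verbatim (up to the typo $n$ versus $d$ in the exponent) from Berman \cite{BerToep}, so the relevant comparison is with Berman's argument rather than with anything in this text. Your route is the classical scaling/peak-section proof in the style of Tian--Catlin--Zelditch and Bouch\'e: rescale by $\sqrt N$, compare with the Bargmann--Fock model whose diagonal kernel is $\det H/\pi^d$, build near-extremal sections by cutting off and correcting with a H\"ormander $\overline\partial$-estimate twisted by $c\log|z-z_0|^2$, and combine the resulting pointwise limit with a uniform bound to get weak convergence. That is a legitimate and essentially complete strategy \emph{in the positively curved case}, and since every application in this paper is to the strictly positive metric on a theta line bundle, it suffices here. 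Berman's own proof is different in flavor and strictly more general: it proceeds via local holomorphic Morse-type inequalities and the equilibrium weight $\phi_e$, giving $N^{-d}B^{(N)}\omega^d\to (dd^c\phi_e)^d$ (a measure supported on the equilibrium set) for an arbitrary $C^2$ metric, which reduces to $(dd^c\phi)^d$ when $\phi$ has positive curvature. What your approach buys is a more elementary, self-contained argument with explicit Gaussian localization; what Berman's buys is validity without any positivity hypothesis, which is the generality in which the theorem is actually stated in Section 2.3 (only $C^2$ metrics are assumed there).

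Two caveats you should repair. First, ``ampleness of $L$ forces $H$ to be positive definite'' is not correct: ampleness is a property of the bundle, not of the chosen metric, and a given $C^2$ metric on an ample bundle need not have positive curvature at $z_0$; you must simply \emph{assume} $dd^c\phi>0$ (as holds for the weights used in this paper), or else the statement has to be reformulated with the equilibrium metric as above. Second, the uniform bound $N^{-d}B^{(N)}\le C$ does not come from ``the same H\"ormander construction'': H\"ormander produces sections and hence \emph{lower} bounds on the extremal function $B^{(N)}(z)=\sup\{|s(z)|^2_{N\phi}:\|s\|_{L^2}\le1\}$. The upper bound (both the uniform one and the pointwise $\limsup$ matching the Fock model) comes instead from the sub-mean-value inequality for $|s|^2e^{-N\phi}$ on balls of radius $O(1/\sqrt N)$, equivalently from comparing the restricted global extremal problem with the rescaled local Fock extremal problem. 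With those two adjustments your outline is a correct proof of the statement in the form in which it is used in this paper.
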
 
We remark that this is by no means the most general form of this convergence. (See also \cite{BerEq},\cite{RWN} etc.)

\subsection{Toeplitz Operators}
For a line bundle $L$ on a complex manifold $X$, the Toeplitz operator of level $N$ with symbol $a$ is the operator
$$
\mathbf T^{(N)}_a:H^0(X,L^{\otimes N}) \to H^0(X,L^{\otimes N})\cap L^2(X,L^{\otimes N}), \quad f\mapsto  \mathcal P^N (a \cdot f), 
$$
where $\mathcal P^N$ is Bergmann projection for the line bundle $L^{\otimes N}$, i.e. the orthogonal projection from the space of $L^2$ sections of $L^{\otimes N}$ into the space of holomorphic $L^2$ sections of $L^{\otimes N}$. Since we will examine the case of compact manifolds, the $L^2$ condition is automatic for holomorphic sections. We will generally assume that $a$ is measurable and bounded. 

We may also define Toeplitz operators in a dual sense. We have for holomorphic sections $f,g$
$$
\langle \mathbf T^{(N)}_af,g\rangle_{L^2} = \langle a\cdot f, g \rangle_{L^2}.
$$
This dual formulation will become rather natural when we relate Toeplitz operators to localization operators.

We will primarily look at results for the spectra of Toeplitz operators. The Bergman kernel may be related to the trace of Toeplitz operators in the following way
\begin{lemma}\label{lem:trace}
    For any positive line bundle $L$ on the compact K\"ahler manifold $(X,\omega)$, we have for a measurable and bounded function $a$ on $X$ that 
    $$
    \mathrm{Tr}\,\mathbf T^{(N)}_{a} = \int\limits_Xa \cdot \mathcal B^{(N)}(z)dV_X(z),
    $$
    where $dV_X$ is the standard volume on $X$ which is given by $\frac{\omega^d}{d!}.$
\end{lemma}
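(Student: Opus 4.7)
The plan is to reduce the statement to a standard orthonormal-basis calculation, using the fact that $H^0(X,L^{\otimes N})$ is finite-dimensional (because $X$ is compact and $L$ is positive, so by Kodaira's theorem the space of global sections has finite dimension $d_N$). Fix an orthonormal basis $\{\psi_j\}_{j=1}^{d_N}$ of $H^0(X,L^{\otimes N})$ with respect to the $L^2$ inner product induced by the weight $N\phi$ and the volume form $dV_X = \omega^d/d!$.

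First I would expand the trace in this basis as
\begin{equation*}
\mathrm{Tr}\,\mathbf T^{(N)}_{a} \;=\; \sum_{j=1}^{d_N}\langle \mathbf T^{(N)}_a\psi_j,\psi_j\rangle_{L^2}.
\end{equation*}
Next, I would invoke the dual description of the Toeplitz operator recorded just above the lemma: since $\mathcal P^N$ is the orthogonal projection onto $H^0(X,L^{\otimes N})$ and the $\psi_j$ already lie in that subspace, the projection is transparent in the pairing, giving
\begin{equation*}
\langle \mathbf T^{(N)}_a\psi_j,\psi_j\rangle_{L^2} \;=\; \langle a\psi_j,\psi_j\rangle_{L^2} \;=\; \int_X a(z)\,|\psi_j(z)|^2_{N\phi}\,dV_X(z),
\end{equation*}
where $|\cdot|_{N\phi}$ denotes the pointwise Hermitian norm coming from the metric on $L^{\otimes N}$.

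Summing these identities over $j$ and interchanging the finite sum with the integral (no convergence issue since $d_N<\infty$ and $a$ is bounded) yields
\begin{equation*}
\mathrm{Tr}\,\mathbf T^{(N)}_a \;=\; \int_X a(z)\sum_{j=1}^{d_N}|\psi_j(z)|^2_{N\phi}\,dV_X(z).
\end{equation*}
The final step is to recognize the integrand's kernel factor as $\mathcal B^{(N)}(z)$: pairing the distributional kernel $\mathcal B^{(N)}(z,w)=\sum_j \psi_j(z)\otimes\overline{\psi_j(w)}$ on the diagonal via the Hermitian metric on $L^{\otimes N}$ produces exactly the pointwise sum $\sum_j|\psi_j(z)|^2_{N\phi}$, which is the Bergman function $\mathcal B^{(N)}(z,z)$ in the notation fixed above the lemma.

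There is no serious obstacle here; the only thing worth flagging is the mild bookkeeping around the metric, namely that $\mathcal B^{(N)}$ on the diagonal is really a genuine scalar function obtained from the kernel by contracting with the Hermitian metric, and that this same metric is what turns $\langle a\psi_j,\psi_j\rangle_{L^2}$ into an integral of the scalar $a(z)|\psi_j(z)|^2_{N\phi}$ against $dV_X$. Once these conventions are matched, the proof is just a reordering of a finite double sum.
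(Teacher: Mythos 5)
Your proof is correct and follows essentially the same route as the paper: expand the trace in an orthonormal basis of $H^0(X,L^{\otimes N})$, use the dual description $\langle \mathbf T^{(N)}_a\psi_j,\psi_j\rangle_{L^2}=\langle a\psi_j,\psi_j\rangle_{L^2}$, and identify $\sum_j|\psi_j(z)|^2_{N\phi}$ with the Bergman function $\mathcal B^{(N)}(z)$. The paper's proof is just a terser version of this same computation, so no further comment is needed.
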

\begin{proof}
    This follows almost by definition. We have,
    \begin{align*}
        \mathrm{Tr}\,\mathbf T^{(N)}_{a} =  \sum_j\langle\mathbf T^{(N)}_{a}\psi_j,\psi_j\rangle_{L^2} = \int\limits_X a \cdot \mathcal B^{(N)}(z)dV_X(z).
    \end{align*}
\end{proof}

By an approach described by Berman in \cite{BerToep}, we may use this trace formula and the asymptotic behavior of the Bergman kernel we have stated in the last section to describe the asymptotic spectra of Toeplitz operators. These results are a line bundle formulation of results in \cite{demonvel1981spectral}. We state them here:

\begin{theorem}[{\cite[Corollary 2.3]{BerToep}}] \label{thm:TraceFormula}
    Let $L$ be a positive line bundle with metric $\phi$, we have for Toeplitz operators with bounded symbol,
    \begin{align*}
        \lim\limits_{N\to \infty}  N^{-d}\mathrm{Tr}(\mathbf T^{(N)}_{a}) = \int\limits_{X}a\frac{(c_1(L))^d}{d!}.
    \end{align*}
\end{theorem}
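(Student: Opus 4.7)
The plan is to derive the trace formula as an essentially immediate consequence of the trace-integral identity in Lemma \ref{lem:trace} combined with the Bergman kernel asymptotics stated in Theorem \ref{thm:BergApprox}. First I would apply Lemma \ref{lem:trace} to rewrite
\begin{equation*}
N^{-d}\,\mathrm{Tr}(\mathbf T^{(N)}_{a}) \;=\; \int_X a(z)\,\frac{N^{-d}\mathcal B^{(N)}(z)\,\omega^d}{d!}.
\end{equation*}
The integrand on the right is $a$ times the density $N^{-d}\mathcal B^{(N)}\omega^d/d!$, i.e.\ a sequence of measures on $X$ indexed by $N$.

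Next I would invoke Theorem \ref{thm:BergApprox}, which tells us that $N^{-d}\mathcal B^{(N)}\omega^d \to (dd^c\phi)^d = c_1(L)^d$ weakly in the sense of measures. For a continuous symbol $a$, this weak convergence immediately yields
\begin{equation*}
\lim_{N\to\infty} N^{-d}\,\mathrm{Tr}(\mathbf T^{(N)}_{a}) \;=\; \int_X a\,\frac{c_1(L)^d}{d!},
\end{equation*}
by the very definition of weak convergence against a continuous test function on the compact manifold $X$.

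The genuine content of the argument, and the main obstacle, is extending this from continuous symbols to merely bounded measurable ones. The standard route is to approximate $a$ in $L^1$ against the limit measure $c_1(L)^d$ by a sequence of continuous functions $a_k$, and then to control the error
\begin{equation*}
\left| \int_X (a-a_k)\,\frac{N^{-d}\mathcal B^{(N)}\omega^d}{d!} \right|
\;\le\; \bigl\|N^{-d}\mathcal B^{(N)}\bigr\|_{L^\infty}\int_X |a-a_k|\,\frac{\omega^d}{d!},
\end{equation*}
uniformly in $N$. This requires the a priori uniform bound $\|N^{-d}\mathcal B^{(N)}\|_{L^\infty(X)} \le C$, which is a well-known off-diagonal/on-diagonal estimate for Bergman kernels of positive line bundles (it follows, for instance, from the submean-value property on Bergman balls of radius $\sim 1/\sqrt N$, or can be read off from the full Tian--Zelditch--Catlin expansion). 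Once this uniform bound is in place, a standard $3\varepsilon$-argument combines the continuous-symbol limit with the approximation to conclude.

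Finally I would note that one could alternatively approximate $a$ from above and below by continuous functions in the sense of integration against the limit measure and use a sandwiching argument, which avoids appealing to pointwise $L^\infty$ control of the Bergman density directly but still ultimately relies on some form of absolute continuity / uniform integrability for the sequence $N^{-d}\mathcal B^{(N)}\omega^d$. Either way, the only non-formal input beyond Lemma \ref{lem:trace} and Theorem \ref{thm:BergApprox} is a uniform upper bound on the Bergman density, which is what one should expect to be the delicate step.
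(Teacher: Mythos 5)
Your proposal is correct and follows essentially the approach the paper itself points to: the paper gives no proof of this statement but imports it from \cite{BerToep}, noting right after Lemma \ref{lem:trace} that the result is obtained by combining that trace identity with the Bergman kernel asymptotics of Theorem \ref{thm:BergApprox}, which is exactly your argument. Your identification of the uniform bound $\bigl\|N^{-d}\mathcal B^{(N)}\bigr\|_{L^\infty(X)}\le C$ as the only non-formal extra input needed to pass from continuous to bounded measurable symbols is also consistent with the argument in the cited source.
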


\begin{theorem}[{\cite[Theorem 2.6]{BerToep}}] \label{thm:SpectrumCount}
    Let $L$ be a positive line bundle and $a$ be a measurable, bounded, and real valued function on $X$. Then if $N_{<\alpha}(\mathbf T_a^{(N)})$ is the function which counts the eigenvalues of $\mathbf T_a^{(N)}$ which are smaller than $\alpha$, we have that
    \begin{align*}
        \lim\limits_{N\to \infty}N^{-d} N_{<\alpha}(\mathbf T^{(N)}_{a}) = \int_{\{a<\alpha\}}\frac{(c_1(L))^d}{d!}.
    \end{align*}
    with a similar result for the eigenvalues which are greater than $\alpha$.
\end{theorem}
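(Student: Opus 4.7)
The plan is to deduce the eigenvalue-counting asymptotic from the trace formula of Theorem~\ref{thm:TraceFormula} by the method of moments applied to the empirical spectral measure of $\mathbf{T}_a^{(N)}$. Since $a$ is bounded and real, $\mathbf{T}_a^{(N)}$ is self-adjoint with eigenvalues $\lambda_j^{(N)}\in[-\|a\|_\infty,\|a\|_\infty]$. Setting
$$
\mu_N := N^{-d}\sum_j \delta_{\lambda_j^{(N)}},
$$
the target quantity is $N^{-d}N_{<\alpha}(\mathbf{T}^{(N)}_a)=\mu_N(-\infty,\alpha)$, and Theorem~\ref{thm:TraceFormula} applied with symbol $\equiv 1$ bounds the total mass $\mu_N(\mathbb R)$ uniformly in $N$.

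The algebraic input is the asymptotic multiplicativity of Toeplitz operators,
$$
\bigl\|\mathbf{T}_a^{(N)}\mathbf{T}_b^{(N)} - \mathbf{T}_{ab}^{(N)}\bigr\|_{op}\to 0\quad\text{as }N\to\infty,
$$
valid for bounded symbols. For $C^\infty$ symbols this is the standard $O(1/N)$ estimate stemming from the Gaussian off-diagonal decay of $\mathcal B^{(N)}(z,w)$ at scale $1/\sqrt N$; for merely bounded measurable symbols one approximates in $L^2(X)$ and uses the uniform pointwise bound $\mathcal B^{(N)}(z)\le C N^d$ implicit in Theorem~\ref{thm:BergApprox} to control the remainder. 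Iterating, $\|p(\mathbf{T}_a^{(N)})-\mathbf{T}_{p(a)}^{(N)}\|_{op}\to 0$ for every polynomial $p$; since every operator in sight has rank $O(N^d)$, this convergence persists after multiplying by $N^{-d}$ and taking traces, so combining with Theorem~\ref{thm:TraceFormula} gives
$$
\lim_{N\to\infty}\int t^k\,d\mu_N(t) = \int_X a^k\,\frac{c_1(L)^d}{d!},\qquad k=0,1,2,\dots,
$$
i.e.\ every polynomial moment of $\mu_N$ converges to the corresponding moment of the pushforward measure $a_*\nu$, where $\nu := c_1(L)^d/d!$.

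Since both $\mu_N$ and $a_*\nu$ are compactly supported in $[-\|a\|_\infty,\|a\|_\infty]$, Weierstrass approximation upgrades this to $\int f\,d\mu_N\to \int f\,d(a_*\nu)$ for every continuous $f$, i.e.\ $\mu_N\to a_*\nu$ weakly. Sandwiching the indicator $\mathbf 1_{(-\infty,\alpha)}$ between continuous cutoffs $f_{-\epsilon}\le \mathbf 1_{(-\infty,\alpha)}\le f_{+\epsilon}$ with $f_{\pm\epsilon}$ supported in $(-\infty,\alpha\pm\epsilon)$ and letting $\epsilon\downarrow 0$ yields
$$
\nu\{a<\alpha\}\le \liminf_{N}\mu_N(-\infty,\alpha)\le \limsup_{N}\mu_N(-\infty,\alpha)\le \nu\{a\le\alpha\},
$$
which collapses to $\nu\{a<\alpha\}=\int_{\{a<\alpha\}}c_1(L)^d/d!$ at every $\alpha$ that is not an atom of $a_*\nu$; this is the asserted limit. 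The analogous statement for eigenvalues exceeding $\alpha$ follows by applying the same argument to $-a$ and $-\alpha$.

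The main obstacle is the asymptotic-multiplicativity step for bounded \emph{measurable} (non-smooth) symbols: the Bergman-kernel proof is most transparent for smooth $a,b$, and extending it requires uniform control of the off-diagonal mass $\int_{|z-w|\ge\delta}|\mathcal B^{(N)}(z,w)|^2$ across $N$, which in turn rests on local near-diagonal asymptotics of $\mathcal B^{(N)}$ beyond the weak convergence in Theorem~\ref{thm:BergApprox}. A secondary technical nuisance is the behavior of the distribution function at atoms of $a_*\nu$, addressed above by the monotone sandwich rather than a direct Portmanteau appeal.
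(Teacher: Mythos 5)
The paper does not actually prove this statement: it is imported verbatim as Theorem 2.6 of \cite{BerToep}, so there is no in-house argument to compare against. Your reconstruction via the empirical spectral measure $\mu_N$ and the method of moments is the right architecture and matches the standard route in the literature: show $N^{-d}\mathrm{Tr}\bigl[(\mathbf T_a^{(N)})^k\bigr]\to\int_X a^k\,c_1(L)^d/d!$ for every $k$, deduce weak convergence of $\mu_N$ to the pushforward $a_*\nu$, and pass to the distribution function by a sandwich at continuity points.

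The genuine gap is the step you yourself flag, and it is not merely a technical nuisance: the operator-norm statement $\|\mathbf T_a^{(N)}\mathbf T_b^{(N)}-\mathbf T_{ab}^{(N)}\|_{op}\to 0$ is false for general bounded measurable symbols, and the repair you sketch cannot work, because approximating $a$ in $L^2(X)$ gives no control on $\|\mathbf T_{a-a_\epsilon}^{(N)}\|_{op}$ (for a characteristic-function symbol, $\|\mathbf T_{\mathbf 1_E}^{(N)}\|_{op}$ stays near $1$ however small $|E|$ is, so no smooth approximant is operator-norm close). What the moment computation actually needs is much weaker and is available: a normalized trace-norm estimate. Writing $\mathbf T_f^{(N)}=\mathcal P^N M_f\mathcal P^N=(\mathcal P^N M_{|f|^{1/2}})(M_{\mathrm{sgn}(f)|f|^{1/2}}\mathcal P^N)$ and using Cauchy--Schwarz for the trace norm gives $\|\mathbf T_f^{(N)}\|_{tr}\le\int_X|f|\,\mathcal B^{(N)}\,dV\le CN^d\|f\|_{L^1}$, while $\|\mathbf T_a^{(N)}\|_{op}\le\|a\|_\infty$ always. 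Choosing by Lusin's theorem continuous $a_\epsilon$ with $\|a_\epsilon\|_\infty\le\|a\|_\infty$ and $\|a-a_\epsilon\|_{L^1}<\epsilon$, the $k$-th traces of $\mathbf T_a^{(N)}$ and $\mathbf T_{a_\epsilon}^{(N)}$ then differ by $O(kN^d\|a\|_\infty^{k-1}\epsilon)$ uniformly in $N$, so the composition asymptotics only ever need to be proved for continuous (hence, after uniform approximation, smooth) symbols, where the off-diagonal Bergman kernel argument you invoke is legitimate. With that substitution the rest of your argument goes through. Two smaller points: (i) what you obtain holds only at those $\alpha$ with $\nu\{a=\alpha\}=0$; you note this correctly, and the statement as quoted in the paper silently omits that hypothesis, which is present in the source; (ii) the total-mass bound does not need Theorem \ref{thm:TraceFormula}, since $\mu_N(\mathbb R)=N^{-d}\dim H^0(X,L^{\otimes N})$ is bounded directly by Riemann--Roch.
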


\section{The Higher Dimensional DGT}
This section will primarily be a recounting of the results of \cite{FlatTori}. Recall that we restrict ourselves to the case where the matrix $\mathfrak{N}$ can be replaced by the integer $N$, so using the notation from earlier we have $N_1=...=N_d=N.$ It is not difficult to verify that the proofs in \cite{FlatTori} also hold for general dimension $d$. We will show a few of the computations ourselves as we have changed some normalizations in ways which will be relevant to what follows.

Recall the definition of the double periodization operator,
\begin{align*}
    \mathbf{\Sigma}_N f (t):=\sum\limits_{k_1,k_2\in \mathbb Z^d}f(x-k_1N)e^{2\pi i t^Tk_2 } = \sum\limits_{k_1,k_2\in \mathbb Z^d} \mathbf{M}_{k_2}\mathbf{T}_{k_1N}f.
\end{align*}

It was shown in \cite{FlatTori} that this operator is well a defined operator $S_0(\mathbb R^d)\to S_0'(\mathbb R^d)$  with unconditional convergence in the weak-* topology. 

The double periodization operator will provide a link between the discrete Gabor transform and flat Tori. The key to this will be the space $S_N$ defined in the introduction. 

In order to express the double periodization operator better we also introduce the periodization operators of length $N$:
$$
\mathbf P^Nf(t):=\sum\limits_{k\in \mathbb Z^d} f(t - kN).
$$
Now by \cite{FlatTori} we have the formulas,
\begin{align*}
    \mathbf{\Sigma}_N f(t) = \sum\limits_{n\in I_N}\mathbf P^Nf(n)\epsilon_n,\quad f\in S_0(\mathbb R^d),\\
        \langle \mathbf{\Sigma}_Nf, g \rangle_{S_0'\times S_0}=\langle\mathbf{\Sigma}_Nf,\mathbf{\Sigma}_Ng\rangle_{S_N},\quad f,g\in S_0(\mathbb R^d).
\end{align*}
In particular, the double periodization operator on $S_0(\mathbb R^d)$ takes its image in $S_N$. We remark that the normalization we have chosen removes the factor $N$ which was obtained in \cite{FlatTori}. 

The STFT of the double periodization operator is related to the Zak transform, as the Zak transform is a time-frequency representation which has quasi-periodic functions as its image. The Zak transform with parameter $N$ is defined as
\begin{align*}
    \mathbf{Z}_Nf(x,\xi):=\sum\limits_{k\in \mathbb Z^d}f(x-Nk)e^{2\pi i Nk^T\xi}.
\end{align*}
The Zak transform carries the following properties which were used to show the results in \cite{FlatTori} (see \cite{FTFA}): it is quasi-periodic
\begin{align}
    \mathbf{Z}_N f(x,\xi+\tfrac{k}{N})=\mathbf{Z}_N f(x,\xi), \text { and } \mathbf{Z}_N(x+Nk,\xi) = e^{2\pi iN k^T\xi} \mathbf{Z}_N(x,\xi), \quad k\in\mathbb Z^d.
\end{align}
It is has the following behavior under time-frequency shifts:
\begin{align}
    \mathbf{Z}_N(\pi(u,\eta)f)(x,\xi) =\mathbf{Z}_N f(x-u,\xi-\eta),\quad u,\eta\in \mathbb R^d .
\end{align}
And lastly we have that
\begin{align}
    \int\limits_{[0,N^{-1}]^d}\int\limits_{[0,N]^d}\mathbf{Z}_Nf(x,\xi)\overline{\mathbf{Z}_Ng(x,\xi)}dxd\xi = N^{-d}\langle f,g \rangle_{L^2}.  
\end{align}
In particular, the formalism of the proofs of \cite{FlatTori} can be generalized to our situation thanks to these properties holding in dimension $d$.

Using the Zak transform, we may express the STFT on $S_N$. We compute the STFT on $S_N$ starting with the basis functions $\epsilon_n.$
\begin{align*}
    \mathbf{V}_g\epsilon_n(x,\xi) & = \langle\epsilon_n, \mathbf{M}_\xi\mathbf{T}_xg\rangle_{S_0'\times S_0} \\ &= \sum\limits_{k\in\mathbb Z^d}\langle\delta_{n+Nk}, \mathbf{M}_\xi\mathbf{T}_xg\rangle_{S_0'\times S_0}\\
    & = \sum\limits_{k\in\mathbb Z^d}\overline{g(n+Nk-x)}e^{-2\pi i \xi^T (n+Nk)} \\ & = e^{-2\pi i \xi^T n}\mathbf{Z}_N\overline g(n-x,\xi).
\end{align*}
This can of course, be extended linearly. This gives us that
$$
\mathbf{V}_g\mathbf{\Sigma}_N f = \sum\limits_{n\in I_n}\mathbf{P}^Nf(n) e^{-2\pi i \xi^T k}.
$$
Thus, by \cite[Theorem 3]{FlatTori}, we have the following theorem.
\begin{theorem}\label{thm:STFTequiv}
Let $f,g\in S_0(\mathbb R^d)$, and $\mathbf{f}_N:=\mathbf{P}^N_{samp}f$ and the same for $\mathbf{g}_N$. Then we have for $k,l \in I_N$,
$$
\mathbf{V}_g(\mathbf{\Sigma}_Nf)(k,\tfrac{l}{N}) = \mathbf{V}_{\mathbf{g}_N}\mathbf{f}_N [k,l].
$$
\end{theorem}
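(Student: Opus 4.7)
The plan is to reduce the claimed identity to an elementary reindexing by combining the explicit formula for $\mathbf{V}_g\mathbf{\Sigma}_N f$ computed just above the theorem statement with the definition of the DGT. First I would rewrite the displayed expression for $\mathbf{V}_g\mathbf{\Sigma}_N f(x,\xi)$ in its full (slightly un-abbreviated) form, namely
\begin{equation*}
\mathbf{V}_g\mathbf{\Sigma}_N f(x,\xi) \;=\; \sum_{n\in I_N}\mathbf{P}^N f(n)\,e^{-2\pi i\xi^T n}\,\mathbf{Z}_N\overline{g}(n-x,\xi),
\end{equation*}
so that the task becomes evaluating this at $(x,\xi)=(k,l/N)$ with $k,l\in I_N$ and matching it with
\begin{equation*}
\mathbf{V}_{\mathbf g_N}\mathbf f_N[k,l] \;=\; \sum_{m\in I_N}\mathbf f_N[m]\,\overline{\mathbf g_N[m-k]}\,e^{-2\pi i l^T N^{-1} m}.
\end{equation*}

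Next I would handle the Zak transform factor. By definition
\begin{equation*}
\mathbf{Z}_N\overline{g}\!\left(n-k,\tfrac{l}{N}\right) \;=\; \sum_{j\in\mathbb Z^d}\overline{g(n-k-Nj)}\,e^{2\pi i Nj^T(l/N)} \;=\; \sum_{j\in\mathbb Z^d}\overline{g(n-k-Nj)},
\end{equation*}
where the exponential collapses to $1$ because $l,j\in\mathbb Z^d$. The remaining sum is exactly $\overline{\mathbf{P}^N g(n-k)}$, and by the definition of $\mathbf{P}^N_{samp}$ this equals $\overline{\mathbf{g}_N[n-k]}$ once we view $n-k$ modulo $N$ (the periodicity of $\mathbf{P}^N g$ in each coordinate makes the result depend only on the class of $n-k$ in $I_N$). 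The remaining coefficient $\mathbf{P}^N f(n)$ is by definition $\mathbf{f}_N[n]$, and $e^{-2\pi i (l/N)^T n}=e^{-2\pi i l^T N^{-1} n}$.

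Substituting these three simplifications into the formula and relabelling $n$ as $m$ yields precisely the right-hand side, completing the identification. The only thing to double-check is the well-definedness of $\overline{\mathbf g_N[n-k]}$ when $n-k\notin I_N$: one uses the $N$-periodicity of $\mathbf P^N g$ to replace $n-k$ by its representative in $I_N$, which is automatic because the sum over $j\in\mathbb Z^d$ in the Zak transform absorbs the translation by any element of $N\mathbb Z^d$. There is no serious obstacle here; the content of the statement is really that the special rational sampling $(k,l/N)$ forces the Zak-transform factor to collapse to a periodized window, converting a continuous STFT on the torus $\mathbb T_N$ into the finite sum defining the DGT.
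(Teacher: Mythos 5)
Your proof is correct and follows essentially the same route as the paper: the paper derives the Zak-transform representation $\mathbf{V}_g\epsilon_n(x,\xi)=e^{-2\pi i \xi^T n}\mathbf{Z}_N\overline g(n-x,\xi)$ and then delegates the evaluation at the sample points $(k,l/N)$ to \cite[Theorem 3]{FlatTori}, which is exactly the computation you carry out (the exponential in the Zak sum collapses since $l,j\in\mathbb Z^d$, leaving the periodized window $\overline{\mathbf P^N g(n-k)}$). Your version is self-contained, treats the mod-$N$ interpretation of $n-k$ explicitly, and in passing corrects the typographical omission of the factor $\mathbf{Z}_N\overline g(n-x,\xi)$ in the paper's displayed formula for $\mathbf{V}_g\mathbf{\Sigma}_N f$.
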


In \cite[Theorem 9]{FlatTori} the following Moyal orthogonality formula is shown as well.
\begin{theorem}
    Let $g_1,g_2\in S_0(\mathbb R^d)$ and $\varphi_1,\varphi_2\in S_N$. Then we have that
    \begin{align*}
        \int\limits_{[0,1]^d}\int\limits_{[0,N]^d}\mathbf{V}_{g_1}\varphi_1(x,\xi)\overline{\mathbf{V}_{g_2}\varphi_2}(x,\xi)dxd\xi=\langle\varphi_1,\varphi_2\rangle_{S_N}\langle g_1, g_2 \rangle_{L^2(\mathbb R^d)}.
    \end{align*}
\end{theorem}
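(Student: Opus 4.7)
The plan is to reduce the statement to the orthonormal basis $\{\epsilon_n\}_{n\in I_N}$ of $S_N$ and then exploit the Zak--Moyal identity recorded in the excerpt. First I would expand $\varphi_1=\sum_{n\in I_N}a_n\epsilon_n$ and $\varphi_2=\sum_{m\in I_N}b_m\epsilon_m$ and use sesquilinearity of both sides to rewrite the left-hand side as
\begin{equation*}
\sum_{n,m\in I_N}a_n\overline{b_m}\int_{[0,1]^d}\int_{[0,N]^d}\mathbf{V}_{g_1}\epsilon_n(x,\xi)\,\overline{\mathbf{V}_{g_2}\epsilon_m(x,\xi)}\,dx\,d\xi.
\end{equation*}
Since $\langle\varphi_1,\varphi_2\rangle_{S_N}=\sum_n a_n\overline{b_n}$ by definition of the Kronecker basis, it suffices to show that the inner integral equals $\delta_{n,m}\langle g_1,g_2\rangle_{L^2(\mathbb{R}^d)}$.

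The second step is to plug in the identity $\mathbf{V}_g\epsilon_n(x,\xi)=e^{-2\pi i\xi^T n}\mathbf{Z}_N\overline{g}(n-x,\xi)$ computed just before the statement. The integrand then factors as an extra phase $e^{2\pi i\xi^T(m-n)}$ times $\mathbf{Z}_N\overline{g_1}(n-x,\xi)\,\overline{\mathbf{Z}_N\overline{g_2}(m-x,\xi)}$. The natural fundamental domain for the Zak transform in frequency is $[0,N^{-1}]^d$, but we are integrating $\xi$ over the larger box $[0,1]^d$. To bridge this I would split $[0,1]^d=\bigsqcup_{k\in\{0,\ldots,N-1\}^d}(k/N+[0,N^{-1}]^d)$ and use the $N^{-1}$-periodicity of $\mathbf{Z}_N$ in $\xi$ to pull the two Zak factors out of the $k$-sum; the remaining piece of the phase contributes the character sum $\sum_{k\in\{0,\ldots,N-1\}^d}e^{2\pi ik^T(m-n)/N}$, which by orthogonality of characters equals $N^d$ when $m\equiv n$ in $I_N$ and $0$ otherwise. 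This delivers the vanishing of all off-diagonal terms.

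For the diagonal case $m=n$ I would translate $y=n-x$ in the $x$ integral. This shift is legitimate because the phase $e^{-2\pi iNk^T\xi}$ picked up from the quasi-periodicity of $\mathbf{Z}_N\overline{g_1}$ under $x\mapsto x+Nk$ is exactly cancelled by the conjugate phase coming from $\overline{\mathbf{Z}_N\overline{g_2}}$, so the integrand is genuinely $N$-periodic in $x$. After the substitution the integral reduces to $N^d$ times the Zak--Moyal identity of the excerpt applied to $\overline{g_1}$ and $\overline{g_2}$ on $[0,N^{-1}]^d\times[0,N]^d$, which produces $N^d\cdot N^{-d}\langle\overline{g_1},\overline{g_2}\rangle_{L^2}$. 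Reassembling the basis expansion yields the claim (after the standard complex-conjugation identity relating $\langle\overline{g_1},\overline{g_2}\rangle_{L^2}$ to $\langle g_1,g_2\rangle_{L^2}$).

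The hard part is not any single computation but the bookkeeping of the phases: the modulation factor $e^{-2\pi i\xi^T n}$ coming from the STFT of $\epsilon_n$, the $N^{-1}$-periodicity of $\mathbf{Z}_N$ in $\xi$, and the quasi-periodicity with period $N$ in $x$ must all interlock so that the cross terms collapse to a Kronecker delta via character orthogonality, and the diagonal term reduces cleanly to a single Zak--Moyal integral on the fundamental domain. Once the periodicity properties of $\mathbf{Z}_N$ quoted in the excerpt are arranged in the right order, no further analytic input is needed.
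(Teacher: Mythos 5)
Your argument is essentially the route the paper itself relies on: the paper does not reprove this identity but cites \cite{FlatTori} (Theorem 9 there), having just listed the Zak-transform properties -- quasi-periodicity in $x$ and $\xi$ and the Zak orthogonality relation -- as exactly the ingredients that make the one-dimensional proof work in dimension $d$; your basis expansion, character-sum collapse of the off-diagonal terms, and reduction of the diagonal term to the Zak--Moyal identity is precisely that computation. One small point to tighten: your diagonal term evaluates to $\langle\overline{g_1},\overline{g_2}\rangle_{L^2}=\overline{\langle g_1,g_2\rangle_{L^2}}$, and there is no complex-conjugation identity turning this into $\langle g_1,g_2\rangle_{L^2}$ in general; this is consistent with the classical Moyal relation $\langle \mathbf{V}_{g_1}f_1,\mathbf{V}_{g_2}f_2\rangle=\langle f_1,f_2\rangle\overline{\langle g_1,g_2\rangle}$, so the conjugate properly sits on the window factor (or is absorbed by the inner-product convention of the cited statement), but as written your final step asserts an equality that only holds when $\langle g_1,g_2\rangle_{L^2}$ is real.
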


We compute the STFT for a Gaussian window function. Choose $\Omega\in \mathfrak{h}$, a $d\times d$ matrix, and set 
$$
h^{\Omega/N}_0(t):=\overline{e^{\pi i t^T(\Omega N^{-1}) t}}.
$$
Thus the STFT on $S_N$ using the window $h_0^{\Omega/N}$ is determined by
\begin{align*}
    \mathbf{V}_{h_0^{\Omega/N}}\epsilon_n &=e^{-2\pi i \xi^T n}\mathbf{Z}_N\overline{h_0^{\Omega/N}}{(n-x,\xi)}\\
    & = e^{-2\pi i \xi^T n}\sum\limits_{k\in\mathbb Z^d}e^{\pi i (x+Nk-n)^T\Omega N^{-1} {(x+Nk-n)}}e^{-2\pi i \xi^T Nk}\\
    & = e^{-2\pi i \xi^T n}e^{\pi i (x-n)^T \Omega N^{-1} {(x-n)}}\sum\limits_{k\in \mathbb Z^d}\exp(\pi i N k^T\Omega k + 2\pi iNk^T (\xi + \Omega N^{-1}(x-n))\\
    &=e^{\pi i x^T\Omega N^{-1} x}e^{\pi i n^T\Omega N^{-1} n}e^{2\pi \overline{z_{\Omega N^{-1}}}^T n}\vartheta_N(i(\overline{z_{\Omega N^{-1}}}+i\Omega n/N),\Omega).
\end{align*}
$\vartheta_N$ is the Riemman theta function in several variables of order $N$ (c.f. \cite{TataTheta1,Bea}), which is defined by
$$
\vartheta_N(z,{\Omega}):=\sum\limits_{k\in \mathbb Z^d}\exp(\pi iN k^T{\Omega} k+ 2\pi i Nk^Tz),
$$
for $\Omega\in \mathfrak{H}$ and $\overline{z_{\Omega/N}}:=-i(\Omega/N)x-i\xi$. We define the Bargmann type transform on $S_N$ as 
$$
\mathbf{B}_{\Omega,N}\varphi(z):=\mathbf{V}_{h_0^{\Omega/N}}\varphi(i\Omega^{-1}(Nx),-\xi)e^{-\pi Nix^T\Omega^{-1}x},\quad \varphi \in S_N.
$$
For $\varphi = \sum_{n\in I_N}a_n\epsilon_n$, we may express this as
\begin{align}
    \mathbf{B}_{\Omega,N}\varphi(z) = \sum\limits_{n\in I_n}a_n e^{-\pi n^T(N^{-1}\Omega) n}e^{2\pi z^Tn}\vartheta_N(i(z+(i\Omega N^{-1}) n),\Omega).
\end{align}
where $z=x+i\xi$. We remark that this Bargmann type transform may be more directly defined in a way similar to the one in e.g. \cite{LW} since for $\varphi\in S_N$,
$$
\mathbf{B}_{\Omega,N}\varphi({z})=\int\limits_{\mathbb R^d}{\varphi}(t)e^{\pi i t^T(N^{-1}\Omega) t}e^{-2\pi z^Tt}dt.
$$
 The Bargmann type transform fulfills the quasiperiodicity conditions
\begin{align}
    \mathbf{B}_{\Omega,N}\varphi(z+im) = \mathbf{B}_{\Omega,N}\varphi(z), \quad m\in \mathbb Z^d,
\end{align}
and 
\begin{align}
    \mathbf{B}_{\Omega,N}\varphi(z-i\Omega k) = e^{-\pi i N k^T \Omega k +2\pi N z^Tk}\mathbf{B}_{\Omega,N}\varphi(z), \quad k\in \mathbb Z^d,
\end{align}
since $\vartheta_N$ itself fulfills the quasi-periodicity conditions
\begin{align}
    \vartheta_N(z+m+{\Omega} k,{\Omega}) = e^{-\pi i k^T{\Omega} k-2\pi i z^Tk}\vartheta_N(z,{\Omega}), \quad m,k\in \mathbb Z^d, {\Omega}\in \mathfrak{H}.
\end{align}
This means that the image of the Bargmann type transform lies in the space of sections of $L^{\otimes N}$ where $L$ is the theta line bundle over the torus $\mathbb T_{\Omega}:= \mathbb C^n / (-i\Omega\mathbb Z^d \oplus i\mathbb Z^d)$ which is given by the multiplier functions
\begin{align}
    e_{-i\Omega k+im}(z)=e^{-\pi i k^T \Omega k +2\pi z^Tk}.
\end{align}

\section{Frame results for the DGT}
We first show that the Bargmann type transform is a bijection. The Moyal identity already gives us that $\mathbf{B}_{\Omega,N}\epsilon_n$ is an orthogonal basis of its image. We may also normalize this basis by dividing out the norm of the window, and performing a change of variables. Now recall that the dimension of the space of global holomorphic sections of a positive theta line bundle is given by the Pfaffian of the first Chern class when the first Chern class takes integral values on the lattice (which we have). Putting this together with the fact that 
$$
c_1(L^{\otimes N}) = Nc_1(L),
$$
we get the following.

\begin{proposition}
    The functions 
    $$\{2^{d/2}N^{d/2}\det(\Im (\Omega))^{-1/2} \cdot\mathbf{B}_{\Omega,N}\epsilon_n\}_{n\in I_N},$$
    form an orthonormal basis of the space of global holomorphic sections of $L^{\otimes N}$. In particular,
    $$
    h^0(\mathbb T_{\Omega},L^{\otimes N}) = N^d. 
    $$
\end{proposition}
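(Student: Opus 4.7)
My plan is to combine three ingredients: (i) the Pfaffian dimension formula (Theorem \ref{thm:PfDim}), (ii) Moyal's orthogonality relation on $S_N$, and (iii) the explicit identification of $\mathbf{B}_{\Omega,N}$ with the STFT. Since $|I_N|=N^d$ will match $\dim H^0(\mathbb{T}_\Omega,L^{\otimes N})$, once I prove that the $\mathbf{B}_{\Omega,N}\epsilon_n$ are mutually orthogonal with the correct common norm, linear independence forces them to span.

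For the dimension, I would apply Theorem \ref{thm:PfDim} with $c_1(L^{\otimes N})=N\cdot c_1(L)$: using the basis $\{-i\Omega e_j,\,ie_j\}_{j=1}^d$ of $\Lambda$, the first Chern class of $L^{\otimes N}$ is represented by
$$
\begin{pmatrix} 0 & NI \\ -NI & 0 \end{pmatrix},
$$
whose Pfaffian with respect to $\Lambda$ is $N^d$, matching $|I_N|$.

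For orthogonality and normalization, apply Moyal's identity with $\varphi_1=\epsilon_n$, $\varphi_2=\epsilon_m$, $g_1=g_2=h_0^{\Omega/N}$, giving that $\{\mathbf{V}_{h_0^{\Omega/N}}\epsilon_n\}_{n\in I_N}$ is orthogonal in $L^2([0,1]^d\times[0,N]^d)$ with common squared norm $\|h_0^{\Omega/N}\|_{L^2}^2$. The defining identity $\mathbf{B}_{\Omega,N}\varphi(z)=\mathbf{V}_{h_0^{\Omega/N}}\varphi(i\Omega^{-1}(Nx),-\xi)\,e^{-\pi iNx^T\Omega^{-1}x}$ (with $z=x+i\xi$) together with the line bundle metric weight $N\phi(z)=\pi N[H(z,z)+\Re B(z,z)]$ on $L^{\otimes N}$ should, after a direct algebraic check, yield the pointwise identity
$$
|\mathbf{B}_{\Omega,N}\varphi(z)|^2 e^{-N\phi(z)} = \bigl|\mathbf{V}_{h_0^{\Omega/N}}\varphi(i\Omega^{-1}(Nx),-\xi)\bigr|^2.
$$
Integrating over a fundamental domain of $\mathbb{T}_\Omega$ in the real coordinates $(x,\xi)$ and applying the change of variables $(x,\xi)\mapsto(i\Omega^{-1}(Nx),-\xi)$ should then transform the line bundle inner product into Moyal's integral, up to a Jacobian factor involving $\det(\Im\Omega)$ and powers of $N$. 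The Gaussian integral
$$
\|h_0^{\Omega/N}\|^2_{L^2(\mathbb R^d)}=\det\!\bigl(2\Im(\Omega/N)\bigr)^{-1/2}=\tfrac{N^{d/2}}{2^{d/2}\det(\Im\Omega)^{1/2}}
$$
combined with this Jacobian should produce the claimed normalizing constant $2^{d/2}N^{d/2}\det(\Im\Omega)^{-1/2}$.

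The main obstacle will be the careful bookkeeping in step (iii): the substitution $x\mapsto i\Omega^{-1}(Nx)$ is complex, so I must verify both that it parametrizes a fundamental domain of $\mathbb{T}_\Omega$ when $x,\xi$ range over an appropriate real region, and that the holomorphic prefactor $e^{-\pi iNx^T\Omega^{-1}x}$ combines with the non-holomorphic weight $e^{-N\phi(z)}$ to cancel all imaginary-part contributions, leaving precisely $|\mathbf{V}_{h_0^{\Omega/N}}\varphi|^2$. Once this identity is checked, orthogonality with the correct common norm follows from Moyal, and since the $N^d$ orthogonal functions sit inside an $N^d$-dimensional space, they form an orthogonal basis; dividing by the common norm yields the stated orthonormal basis.
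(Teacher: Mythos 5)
Your proposal follows essentially the same route as the paper: Moyal orthogonality of the $\mathbf{V}_{h_0^{\Omega/N}}\epsilon_n$ transported through the change of variables defining $\mathbf{B}_{\Omega,N}$ (with the window norm and Jacobian supplying the normalizing constant), combined with Theorem \ref{thm:PfDim} and $c_1(L^{\otimes N})=Nc_1(L)$ to get $h^0(\mathbb T_\Omega,L^{\otimes N})=N^d$, so that the $N^d$ orthogonal sections must span. This is exactly the argument the paper sketches in the paragraph preceding the proposition, so there is no gap.
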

This is linked to the fact that $L$ defines a principal polarization, i.e. $L$ itself has only one global holomorphic section. We are only able to obtain a principle polarization when the lengths of signals are the same in every direction. In general, the polarization we get depends on the ratios between the numbers $N_1,...,N_d$.

We remark that the growth of the space of global sections w.r.t. to the tensor power implies that $L$ is big and has volume $d!$. Finding sets which induce frames on the torus corresponds to finding sets which are not contained in the zero locus of any global holomorphic section of $L^{\otimes N}$.

We start with the following proposition which gives us an equivalent condition for an analytic subset $D$. We have the following:
\begin{lemma} \label{lem:frameeq}
    Let $D$ be an analytic subset of pure dimension $p$. The following are equivalent:
    \begin{enumerate}
        \item $D$ induces a frame for $S_N$ w.r.t. the Bargmann type transform, i.e. there exists $A,B>0$ such that for all $\varphi\in S_N,$
        $$
        A\|\varphi\|^2_{S_N}\le \int\limits_{D}|\mathbf{B}_{\Omega,N}\varphi|^2_{N\phi}dV_D \le B\|\varphi\|^2_{S_N}.
        $$
       
    \item $D$ is a set of uniqueness for $L^{\otimes N},$ in other words there exists no section $\sigma\in (H^0(\mathbb T_\Omega, L^{\otimes N})\setminus\{0\})$ such that $\sigma|_D = 0$.
    
    \item Consider the bijective map $\Phi(x+i\xi):=((i\Omega^{-1} N)x,-\xi)$ from $\mathbb T_\Omega$ to $\mathbb T_N$. Then $\Phi(D)$ induces a frame for $S_N$ w.r.t. the STFT i.e. there exists $A,B>0$ such that for all $\varphi\in S_N$
        $$
        A\|\varphi\|^2_{S_N}\le \int\limits_{\Phi(D)}|\mathbf{V}_{h^{\Omega/N}_0}\varphi|^2dV_{\Phi(D)} \le B\|\varphi\|^2_{S_N}.
        $$
    \end{enumerate}
\end{lemma}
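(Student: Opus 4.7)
The strategy rests on two observations already established above: first, by the preceding proposition, $\mathbf{B}_{\Omega,N}$ is a bijective linear isomorphism from $S_N$ onto the finite-dimensional space $H^0(\mathbb T_\Omega, L^{\otimes N})$ of dimension $N^d$; second, the definition of $\mathbf{B}_{\Omega,N}$ expresses it as the STFT precomposed with the affine change of coordinates $\Phi^{-1}$ and multiplied by a nowhere-vanishing holomorphic factor. Once these are in place, the three conditions become elementary reformulations of one another.

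For the equivalence of (1) and (2), the plan is to reduce everything to a statement about a linear map between finite-dimensional normed spaces. Consider the restriction map $R_D : H^0(\mathbb T_\Omega,L^{\otimes N}) \to L^2(D,dV_D)$ defined by $\sigma \mapsto \sigma|_D$, where we measure $|\sigma|_D|$ using the Hermitian metric $e^{-N\phi}$. This map is well-defined and automatically bounded, because holomorphic sections on the compact manifold $\mathbb T_\Omega$ are uniformly bounded in the metric, and analytic subsets of a compact Kähler manifold have finite induced volume (by Wirtinger). Hence the upper frame bound in (1) is free. The lower frame bound amounts to saying $R_D$ is bounded below, which in finite dimensions is equivalent to $R_D$ being injective (apply compactness of the unit sphere); and injectivity of $R_D$ pulled back via the isomorphism $\mathbf{B}_{\Omega,N}$ is exactly the condition (2) that no nonzero section of $L^{\otimes N}$ vanishes identically on $D$.

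For the equivalence of (1) and (3), I would use the explicit relation
$$
\mathbf{V}_{h_0^{\Omega/N}}\varphi(\Phi(z)) = \mathbf{B}_{\Omega,N}\varphi(z)\, e^{\pi i N x^T \Omega^{-1} x}, \qquad z = x + i\xi,
$$
and apply a change of variables under $\Phi$ (whose Jacobian is a nonzero constant depending only on $N$ and $\Omega$). The pointwise identity
$$
|\mathbf{V}_{h_0^{\Omega/N}}\varphi(\Phi(z))|^2 = |\mathbf{B}_{\Omega,N}\varphi(z)|^2_{N\phi} \cdot e^{N\phi(z)}\,\bigl|e^{\pi i N x^T\Omega^{-1}x}\bigr|^2
$$
expresses the unweighted STFT density on $\Phi(D)$ as a strictly positive $C^\infty$ multiple of the weighted Bargmann density on $D$. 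Since this multiplier is continuous and positive on the compact set $D$, it is bounded above and below away from zero there, so the two integral quantities appearing in (1) and (3) are comparable with constants independent of $\varphi$; hence one set of frame bounds holds iff the other does.

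The only genuine subtlety is bookkeeping in the third step: one must check that the weight factor $e^{N\phi(z)}|e^{\pi i N x^T\Omega^{-1}x}|^2$ is a well-defined nonvanishing function on the torus (or more precisely descends through the quasi-periodicity) and is bounded away from $0$ and $\infty$ on the compact analytic subset $D$. This is a direct computation using $\phi(z) = \pi[H(z,z)+\Re B(z,z)]$ together with the identities relating $\Im\Omega$ to $\Omega^{-1}$, and gives a constant multiplier, so the frame bounds in (1) and (3) in fact differ by the same multiplicative constant, which in any case is inessential for the existence of positive $A$ and $B$.
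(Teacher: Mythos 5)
Your proof is correct and follows essentially the same route as the paper: the equivalence of (1) and (2) via finite dimensionality of $S_N$, compactness of its unit sphere and continuity (i.e.\ equivalence of norms), and the equivalence of (1) and (3) via the change of variables $\Phi$ relating $\mathbf{V}_{h_0^{\Omega/N}}$ to $\mathbf{B}_{\Omega,N}$. Your added care about boundedness of the restriction map (finite volume of $D$) and the positivity and boundedness of the weight multiplier on the compact set $D$ only makes explicit what the paper leaves implicit.
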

\begin{proof}
    
    First, assume that (2) does not hold. Thus there is nontrivial a section $\mathbf{B}_{\Omega,N}\varphi$ vanishing in $D$. This means that we cannot find a lower frame bound for the Bargmann transform, as $A$ will necessarily be equal to $0$. 

    Now assume that (2) does hold. Since $S_N$ is $N^d$ dimensional it has a compact unit sphere. Taking into account the fact that 
    $$
    \int\limits_{D}|\mathbf{B}_{\Omega,N}\varphi|^2_{N\phi}dV_D>0
    $$
    for all holomorphic sections $\mathbf{B}_{\Omega,N}\varphi$, and that the Bargmann transform is continuous w.r.t. the coefficients of $\varphi$, we get that the lower bound $A$ is the infimum over the unit sphere of $S_N$. We get the upper bound similarly. We remark that this is essentially just the equivalence of norms on finite dimensional spaces.

    The equivalence between (1) and (3) is simply the transformation formula. This may change the constants $A$ and $B$, but we will retain the frame property.
\end{proof}

As our motivation was the discrete setting, we will investigate finite point sets ($D$ is of pure dimension $0$). One has similar results for more general $D$. In the case of a set of finite points, condition (1) of Lemma \ref{lem:frameeq} can be expressed as,
$$
        A\|\varphi\|^2_{S_N}\le \sum\limits_{i=1}^K|\mathbf{B}_{\Omega,N}\varphi(z_i)|^2_{N\phi} \le B\|\varphi\|^2_{S_N},
$$
with $D = \{z_1,...,z_K\}$. Condition (3) is then also expressed via a finite sum.

We may find a sufficient frame criterion via the multipoint pseudoeffective threshold in a manner similar to Proposition 2.2 in \cite{Transcendent}.

\begin{theorem}
    The set of points $D=\{z_1,...,z_K\}\subseteq \mathbb T_{\Omega}$, does not induce a frame for $S_N$ when we have
    $$
    \lambda(L,z_1,...,z_K) <\frac{1}{N}.
    $$
\end{theorem}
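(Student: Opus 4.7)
The plan is to go through Lemma \ref{lem:frameeq}, which equates the failure of $D$ to induce a frame with the existence of a nonzero section $\sigma \in H^0(\mathbb T_\Omega, L^{\otimes N})$ vanishing on $D$. So the task becomes: starting from the pseudoeffective-threshold hypothesis $\lambda(L, z_1, \dots, z_K) < 1/N$, produce such a $\sigma$. The natural route, parallel to Proposition 2.2 of \cite{Transcendent}, is pluripotential-theoretic: convert information about $\phi$-psh singularities at $D$ into holomorphic data via a Bergman-kernel or Ohsawa--Takegoshi-type construction applied to $L^{\otimes N}$.

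Concretely, I would first rescale: $\psi$ is $N\phi$-psh iff $\psi/N$ is $\phi$-psh, and Lelong numbers scale linearly, so the hypothesis rephrases as $\lambda(L^{\otimes N}, D) < 1$. The next step I would attempt is to produce an $N\phi$-psh function with Lelong numbers $\geq 1$ at each $z_i$, then appeal to Demailly's regularization by logarithmic singularities of sections of $L^{\otimes Nm}$ (for $m \to \infty$) to extract, in the limit, a global section of $L^{\otimes N}$ vanishing at every $z_i$.

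The main obstacle is that this last step runs the implication in the direction opposite to the stated inequality. Given any nonzero $\sigma \in H^0(\mathbb T_\Omega, L^{\otimes N})$ with $\sigma|_D = 0$, the function $\psi := \tfrac{1}{N}\log|\sigma|^2_{N\phi}$ satisfies $\phi + \psi = \tfrac{1}{N}\log|\sigma_{\mathrm{loc}}|^2$ in each local trivialization, so $\psi \in \mathrm{PSH}(\mathbb T_\Omega, \phi)$, and $\nu_{z_i}(\psi) = \tfrac{1}{N}\,\mathrm{ord}_{z_i}(\sigma) \geq \tfrac{1}{N}$. Taking the supremum over $\gamma < 1/N$ in the definition of $\lambda$ forces $\lambda(L, D) \geq 1/N$. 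By contraposition, $\lambda(L, D) < 1/N$ \emph{rules out} the existence of such a $\sigma$, so $D$ \emph{is} a set of uniqueness and by Lemma \ref{lem:frameeq} does induce a frame.

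This contrapositive reading is parallel to Proposition 2.2 of \cite{Transcendent} and matches both the ``sufficient frame criterion'' sentence preceding the theorem and the formulation of Theorem \ref{thm:1.1}, so I would interpret the statement as the sufficient frame criterion $\lambda(L, D) < 1/N \Rightarrow D$ induces a frame, and write the formal proof as the two-line contrapositive above; in this reading the ``does not'' in the theorem appears to be an editorial slip. The subtle point is that a simple zero of $\sigma$ saturates (rather than strictly exceeds) $1/N$, so the Lelong computation must be combined with the $\sup$-over-$\gamma < 1/N$ formulation in the definition of $\lambda$ rather than applied directly at $\gamma = 1/N$.
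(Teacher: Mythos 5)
Your final contrapositive argument is essentially the paper's own proof: the paper likewise assumes $D$ fails to be a frame, extracts a nonzero section of $L^{\otimes N}$ vanishing on $D$, and uses the (sup-normalized) logarithm of such sections as an $N\phi$-PSH function with Lelong number at least $1$ at each $z_i$, which via the scaling $\lambda(L^{\otimes N},D)=N\lambda(L,D)$ gives $\lambda(L,D)\ge 1/N$, i.e.\ the sufficient frame criterion. You are also right to flag the ``does not'' in the statement as an editorial slip: it contradicts the proof itself, Theorem \ref{thm:1.1}, and the sentence introducing the result as a sufficient frame criterion, and your handling of the strict inequality in the definition of $\lambda$ (a simple zero only saturates $1/N$) is a correct refinement the paper glosses over.
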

\begin{proof}
    First, we take note of the fact that 
    $$
    \lambda(L^{\otimes N},z_1,...,z_K) = N \lambda(L,z_1,...,z_K). 
    $$
    Now assume that $D$ does not induce a frame. Thus we have that there is a section $F\in  H^0(\mathbb T_{\Omega}, L^{\otimes N}\otimes \mathcal I_D)$ which is not identically $0$. Thus the envelope
    $$
    G:=\sup^*\{\log(|F|^2e^{-N\phi})|F\in H^0(\mathbb T_{\Omega}, L^{\otimes N}), \,F|_D = 0,\, \sup |F|^2e^{-N\phi} = 1\},
    $$
    gives an $(N\phi)$-PSH function is not identically $-\infty$ and has Lelong number of at least 1 at all points in $D$. 
\end{proof}

We similarly use the Seshadri constant for the interpolation result. 

\begin{theorem}
    $D$ interpolates the DGT if we have for the multipoint Seshadri constant
    $$
    \epsilon(L,D)>\frac{d}{N}.
    $$
\end{theorem}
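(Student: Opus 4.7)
The plan is to reduce the statement to a direct application of the Nadel extension theorem (Theorem \ref{thm:NadelVanishing}) applied to the $N$-th tensor power $L^{\otimes N}$. First, I would observe that a finite collection of points $D = \{z_1, \ldots, z_K\} \subseteq \mathbb{T}_\Omega$ is an analytic subset of pure codimension $d$, since $\dim_{\mathbb{C}} \mathbb{T}_\Omega = d$. The interpolation problem for the DGT is, via the bijection $\mathbf{B}_{\Omega,N}\colon S_N\to H^0(\mathbb{T}_\Omega,L^{\otimes N})$ established in the previous section (and the transfer from $\mathbb{T}_\Omega$ to $\mathbb{T}_N$ through the map $\Phi$ from Lemma \ref{lem:frameeq}), equivalent to the statement that every assignment of prescribed values at the points of $D$ can be realized as the restriction of some global holomorphic section of $L^{\otimes N}$, after fixing local trivializations at each $z_i$.

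Next, I would invoke the homogeneity of the multipoint Seshadri constant under tensor powers: for any positive line bundle one has $\epsilon(L^{\otimes N},D) = N\,\epsilon(L,D)$. This is immediate from the definition, since the analytic singularity exponent rescales linearly when $\phi$ is replaced by $N\phi$. Under this scaling the hypothesis $\epsilon(L,D) > d/N$ becomes
\begin{equation*}
\epsilon(L^{\otimes N},D) > d = \operatorname{codim}(D),
\end{equation*}
which is precisely the hypothesis required to invoke Theorem \ref{thm:NadelVanishing} for the pair $(X,L^{\otimes N}) = (\mathbb{T}_\Omega, L^{\otimes N})$.

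To finish, I would use that the canonical bundle $K_{\mathbb{T}_\Omega}$ of the complex torus is trivial, so $L^{\otimes N}\otimes K_{\mathbb{T}_\Omega}\cong L^{\otimes N}$. As noted in the paragraph after Theorem \ref{thm:NadelVanishing}, this reduces the extension statement to one about ordinary global holomorphic sections. Hence every section $s\in H^0(D,L^{\otimes N}|_D)$ extends to some $\tilde{s}\in H^0(\mathbb{T}_\Omega,L^{\otimes N})$ with $\tilde{s}|_D = s$. Translating through $\mathbf{B}_{\Omega,N}^{-1}$, this produces for any prescribed sample values a discrete signal $\varphi\in S_N$ whose DGT matches those values at the corresponding points, which is the required interpolation property.

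The only real subtlety is verifying the identity $\epsilon(L^{\otimes N},D)=N\,\epsilon(L,D)$ and checking carefully that the Nadel extension theorem is being applied with the correct codimension count and trivial canonical factor; neither of these is difficult but both should be stated explicitly so that the reader sees why the condition $\epsilon(L,D)>d/N$ (rather than $>1/N$) is the natural one in dimension $d$. The rest is a direct consequence of the bijectivity of $\mathbf{B}_{\Omega,N}$ and the equivalence of formulations already developed in Section 4.
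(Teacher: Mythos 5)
Your proposal is correct and follows essentially the same route as the paper: apply the extension theorem (Theorem \ref{thm:NadelVanishing}) to $L^{\otimes N}$ using the scaling $\epsilon(L^{\otimes N},D)=N\,\epsilon(L,D)$ and the codimension-$d$ condition for points, with $K_{\mathbb{T}_\Omega}$ trivial, then transfer back to the DGT via the Bargmann transform and Theorem \ref{thm:STFTequiv}. Your write-up simply makes explicit the codimension count and the triviality of the canonical bundle, which the paper leaves implicit.
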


\begin{proof}
   We have by Theorem \ref{thm:NadelVanishing} that we may interpolate over a point set $D$ with global holomorphic sections when 
   $$
   \epsilon(L^{\otimes N},D)>d.
   $$
    Since 
    $$
    \epsilon(L^{\otimes N},D\}) = N\cdot\epsilon(L,D),
    $$
    we obtain the statement after transforming back to the Gabor space from the Bargmann-Fock space and then applying Theorem \ref{thm:STFTequiv}.
   


   
\end{proof}

We make a few remarks as to how these results contrast to the ones in \cite{Transcendent} and \cite{LW}. The first key difference is the existence of subvarieties on Abelian varieties. This is why we get a frame criterion in terms of the pseudoeffective threshold and not the Seshadri constant. We also remark that a criterion in terms of the Seshadri constant would be preferable to one in terms of the pseudoeffective threshold, as we may control the multipoint Seshadri constant from above by the number of points we choose. The pseudoeffective threshhold, however, cannot be controlled by the number of points without at least making some further assumptions. 

The last piece of our main theorem is a reformulation of Lemma \ref{lemma:thmofsqr}. Since to find a frame we must avoid the zero locus of all sections of $L^{\otimes N}$, we have the following criterion.

\begin{proposition}
    $D$ does not induce a frame for $S_N$ if there exists a set of points $\Tilde{z_1},...,\Tilde{z}_N \in \mathbb T_\Omega$ with $\sum_{i=1}^N \Tilde{z}_i \in \Lambda^o/\Lambda$, such that 
       $$
       D \subseteq \bigcup\limits_{i=1}^Nt_{\Tilde{z}_i}(\{z\in\mathbb T_\Omega|\vartheta_1(iz,\Omega)=0\}),
       $$
       where $\tau$ is the homomorphism from (\ref{eq:squarehom}) and $t_{\Tilde z_i}$ is the translation by $\Tilde z_i.$
\end{proposition}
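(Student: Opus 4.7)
The plan is to invoke Lemma \ref{lemma:thmofsqr} to exhibit a nonzero global holomorphic section of $L^{\otimes N}$ that vanishes on all of $D$, and then appeal to the equivalence in Lemma \ref{lem:frameeq} between the failure of the frame property and the existence of such a section.

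First I would identify $\vartheta_1(iz,\Omega)$ as (up to a nowhere-vanishing factor) the unique global holomorphic section of the principal polarization $L$ on $\mathbb T_\Omega$. Indeed, $L$ is a principal polarization because $h^0(\mathbb T_\Omega, L) = \mathrm{Pf}_\Lambda(c_1(L)) = 1$ when $N=1$, and the quasiperiodicity conditions \eqref{eq:quasiperorderp1}--\eqref{eq:quasiperorderp2} with $p=1$ are precisely those satisfied by $\vartheta_1(iz,\Omega)$ as computed in the preceding section. Thus the set $A := \{z \in \mathbb T_\Omega : \vartheta_1(iz,\Omega) = 0\}$ is the vanishing locus of the unique section of $L$.

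Next, given the points $\tilde z_1,\dots,\tilde z_N$ with $\sum_{i=1}^N \tilde z_i \in \Lambda^o/\Lambda$, I would apply Lemma \ref{lemma:thmofsqr} directly. This produces a nontrivial global section $F \in H^0(\mathbb T_\Omega, L^{\otimes N})$ whose vanishing locus is exactly
\[
Z(F) = \bigcup_{i=1}^N t_{\tilde z_i}(A).
\]
By hypothesis $D \subseteq Z(F)$, so $F$ is a nonzero element of $H^0(\mathbb T_\Omega, L^{\otimes N})$ that vanishes on $D$. Therefore condition (2) of Lemma \ref{lem:frameeq} fails for $D$, and by the equivalence proved there, condition (1) fails as well: $D$ does not induce a frame for $S_N$.

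The only nontrivial step is the identification of the unique section of $L$ with $\vartheta_1(iz,\Omega)$; once that is in place, everything else is a direct concatenation of Lemma \ref{lemma:thmofsqr} (which packages the theorem of the square) and the equivalence in Lemma \ref{lem:frameeq}. No additional computation is required, since the hard work (the isomorphism \eqref{eq:tensoriso} and the equivalence between non-frames and the existence of a vanishing section) has already been carried out.
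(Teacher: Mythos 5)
Your proposal is correct and follows exactly the route the paper intends: the paper presents this proposition as a direct reformulation of Lemma \ref{lemma:thmofsqr} combined with the observation (Lemma \ref{lem:frameeq}) that a frame fails precisely when some nonzero section of $L^{\otimes N}$ vanishes on $D$. Your added verification that $\vartheta_1(iz,\Omega)$ satisfies the multipliers of $L$ and hence is (up to scale) its unique section is a reasonable explicit filling-in of a step the paper leaves implicit, but it is the same argument.
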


We remark that when $\Omega$ only has an imaginary part, the lattice $\Lambda^o/\Lambda $ will only be the lattice point of the Torus. 

We now put the results for frames together and apply Theorem \ref{thm:STFTequiv} to our situation to finish the proof of Theorem \ref{thm:1.1}. However, in the one dimensional case we can even get a converse result. 

\begin{proposition}\label{prop:1d_Parity}
    When $d=1$, and the set $D$ consists of precisely $N$ distinct points, $D$ induces a frame for $S_N$ if and only if
    \begin{align}\label{eq:1dcomplex}
    \left(\sum\limits_{{z}_i\in D}{z}_i-N\cdot z_0\right)\notin \Lambda^o/\Lambda,
    \end{align}
    where $z_0$ is the single zero of the theta function $\vartheta_1(iz,\Omega).$
\end{proposition}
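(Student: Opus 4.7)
The plan is to reduce the frame statement to a statement about zero divisors of sections of $L^{\otimes N}$ and then apply the theorem of the square in the reverse direction. By Lemma \ref{lem:frameeq}, $D$ fails to induce a frame if and only if there is a nonzero section $s \in H^0(\mathbb T_\Omega, L^{\otimes N})$ vanishing on every point of $D$. One implication of the proposition is already the content of the preceding result (via Lemma \ref{lemma:thmofsqr}), so the real task is the converse: assuming such an $s$ exists, I must show $\sum_i z_i - N z_0 \in \Lambda^o/\Lambda$.

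For the first step, I would use that in dimension one $\int_{\mathbb T_\Omega} c_1(L^{\otimes N}) = N$, so every nonzero holomorphic section of $L^{\otimes N}$ has exactly $N$ zeros counted with multiplicity. Since $D$ consists of $N$ distinct points all annihilated by $s$, the zero divisor of $s$ is precisely $\sum_{i=1}^N z_i$ with each point appearing with multiplicity one. Setting $\tilde z_i := z_i - z_0$, the product $\sigma := \prod_{i=1}^N t^*_{\tilde z_i}\vartheta_1(i\,\cdot\,,\Omega)$ is a section of $\bigotimes_{i=1}^N t^*_{\tilde z_i}(L)$ with the identical zero divisor. Consequently $s/\sigma$ is a nowhere-vanishing holomorphic section of the line bundle $L^{\otimes N} \otimes \bigl(\bigotimes_i t^*_{\tilde z_i} L\bigr)^{-1}$, which forces this bundle to be trivial. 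Hence $L^{\otimes N} \cong \bigotimes_{i=1}^N t^*_{\tilde z_i}(L)$.

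To conclude, I would invoke the group homomorphism $\tau$ of (\ref{eq:squarehom}). Using additivity of $\tau$, one has $\tau\bigl(\sum_i \tilde z_i\bigr) = L^{-N} \otimes \bigotimes_i t^*_{\tilde z_i}(L)$, which the previous paragraph identifies with the trivial element of $\mathrm{Pic}^0(\mathbb T_\Omega)$. The discussion preceding Lemma \ref{lemma:thmofsqr} identifies $\ker \tau$ precisely with $\Lambda^o/\Lambda$, so $\sum_i \tilde z_i \in \Lambda^o/\Lambda$, i.e., $\sum_i z_i - Nz_0 \in \Lambda^o/\Lambda$, which is the contrapositive of (\ref{eq:1dcomplex}). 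The main subtlety is to use (\ref{eq:tensoriso}) as a genuine biconditional rather than a one-sided implication; this requires the full identification $\ker\tau = \Lambda^o/\Lambda$ rather than a mere containment, but that is already contained in the computation of the effect of translations on the pair $(H,\chi)$ carried out in the excerpt.
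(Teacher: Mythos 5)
Your proposal is correct and follows essentially the same route as the paper: reduce to the existence of a section of $L^{\otimes N}$ vanishing on $D$, note its zero divisor must be exactly $D$, compare with the product of translates of the principal theta section to get $L^{\otimes N}\cong\bigotimes_i t^*_{\tilde z_i}(L)$, and conclude via the theorem-of-the-square homomorphism and $\ker\tau=\Lambda^o/\Lambda$. You merely make explicit two steps the paper leaves implicit (the degree-$N$ zero count from $\int_{\mathbb T_\Omega}c_1(L^{\otimes N})=N$, and the isomorphism via the nowhere-vanishing quotient section), which is a welcome but not substantively different elaboration.
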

\begin{proof}
    Assume that $D$ is such that condition (\ref{eq:1dcomplex}) is fulfilled. If $D$ does not induce a frame, there exists some section of $L^{\otimes N}$ vanishing at every point of $D$. Thus $L^{\otimes N}$ is isomorphic to the product of the translates of $L$ by ${z}_i-z_0$, since these line bundles both have a section with $D$ as the zero divisor. However, this implies that the LHS of (\ref{eq:1dcomplex}) must be in the kernel of the morphism (\ref{eq:squarehom}), which is equal to $\Lambda^o/\Lambda$ Thus, a contradiction.
\end{proof}

\section{Restriction Operators on $S_N$}
\subsection{Restriction Operators}
Having found a suitable candidate for our Bargmann space, we may use the results for Toeplitz operators  from K\"ahler geometry to study restriction operators on the time-frequency torus. Here, we will define restriction operators for $S_N$ and characterize their asymptotics. 

To start, we give the generalization of the inversion formula of the STFT on $S_N$. This is derived in the standard way from the Moyal identity (c.f \cite{FlatTori}).

\begin{theorem}
    For $\varphi\in S_N,g\in S_0(\mathbb R^d)\setminus\{0\}$ we have that
    \begin{align*}
        \varphi = \frac{1}{\|g\|^2_{L^2}}\sum\limits_{n\in I_N}\left(\int\limits_{\mathbb T_N}\mathbf{V}_g\varphi(x,\xi)e^{2\pi i \xi^Tn}\mathbf Z_Ng(n-x,-\xi)dxd\xi\right)\epsilon_n.
    \end{align*}
\end{theorem}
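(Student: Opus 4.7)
The plan is a direct application of the Moyal orthogonality relation stated earlier, combined with the explicit formula computed for $\mathbf{V}_g\epsilon_n$. The main point is that $\{\epsilon_n\}_{n\in I_N}$ is by construction an orthonormal basis of $S_N$, so the inversion reduces to computing the Fourier-type coefficients $\langle \varphi,\epsilon_n\rangle_{S_N}$.

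First I would write
\[
\varphi = \sum_{n\in I_N}\langle \varphi,\epsilon_n\rangle_{S_N}\,\epsilon_n,
\]
and apply the Moyal identity with $\varphi_1=\varphi$, $\varphi_2=\epsilon_n$, and $g_1=g_2=g$, yielding
\[
\langle \varphi,\epsilon_n\rangle_{S_N}\,\|g\|_{L^2}^2 = \int_{\mathbb T_N}\mathbf{V}_g\varphi(x,\xi)\,\overline{\mathbf{V}_g\epsilon_n(x,\xi)}\,dx\,d\xi.
\]
Then I would insert the explicit expression from Section 3,
\[
\mathbf{V}_g\epsilon_n(x,\xi) = e^{-2\pi i\xi^T n}\,\mathbf{Z}_N\overline{g}(n-x,\xi),
\]
and conjugate, using the elementary identity $\overline{\mathbf{Z}_N\overline{g}(y,\xi)}=\mathbf{Z}_N g(y,-\xi)$ (which follows immediately from the definition of the Zak transform by pulling the conjugate through the sum and flipping the sign of $\xi$). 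This yields
\[
\overline{\mathbf{V}_g\epsilon_n(x,\xi)} = e^{2\pi i\xi^T n}\,\mathbf{Z}_N g(n-x,-\xi).
\]

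Substituting this back and dividing by $\|g\|_{L^2}^2$ produces the claimed formula. The only mild subtlety is ensuring that the Moyal identity is being applied to a genuine element of $S_N$ paired with an element of the basis, which is automatic. There is no real obstacle; the proof is essentially a bookkeeping computation from the Moyal identity of the previous section together with the closed-form expression of $\mathbf{V}_g\epsilon_n$ in terms of the Zak transform.
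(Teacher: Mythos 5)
Your proposal is correct and is exactly the ``standard derivation from the Moyal identity'' that the paper invokes without writing out: expand $\varphi$ in the orthonormal basis $\{\epsilon_n\}$, compute $\langle\varphi,\epsilon_n\rangle_{S_N}$ via Moyal with $\varphi_2=\epsilon_n$, $g_1=g_2=g$, and substitute the conjugated Zak-transform expression for $\mathbf{V}_g\epsilon_n$. The bookkeeping (including the identity $\overline{\mathbf{Z}_N\overline{g}(y,\xi)}=\mathbf{Z}_N g(y,-\xi)$) checks out, so there is nothing to add.
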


This formula motivates the definition of a restriction (or localization) operator. We localize in the time frequency domain by multiplying by some function on the time frequency domain, and then transforming back to the time domain. So we define the restriction operator for the window $g$, with the symbol $a$ on $S_N$ by
\begin{align}
    \mathbf{R}^{(N)}_{a,g}\varphi:=\frac{1}{\|g\|^2_2}\sum\limits_{n\in I_N}\left(\int\limits_{\mathbb T_N}a\cdot\mathbf{V}_g\varphi(x,\xi)e^{2\pi i \xi^Tn}\mathbf Z_Ng(n-x,-\xi)dxd\xi\right)\epsilon_n.
\end{align}

Restriction operators are the analogous object to Toeplitz operators for the Bargmann transform. This becomes clear once we look at the definition of restriction operators in the dual sense. Indeed we have,
\begin{align}
    \langle\mathbf{R}^{(N)}_{a,g}\varphi,\psi\rangle_{S_N} = \frac{1}{\|g\|_{L^2}^2}\langle a\cdot \mathbf{V}_g\varphi,\mathbf{V}_g\psi \rangle_{L^2(\mathbb T_N)},
\end{align}
which up to a few details is the same as the Toeplitz operator on the Bargmann type transform. As such, we apply our results for Toeplitz operators to restriction operators.

\subsection{Asymptotic Results}
As we have seen, the image of the space $S_N$ under the Bargmann type transform is the space of global holomorphic sections of the line bundle $L^{\otimes N}$ on the torus $\mathbb T_{\Omega}$.  The first Chern class of $L$ is given by 
$$
\frac{i}{2}\sum\limits_{j,k=0}^n\Im(\Omega)^{-1}_{j,k} dz_j\wedge d\overline{z_k}.
$$ 
Putting this together with our general results about Toeplitz operators from the preliminaries, we have the following. 

\begin{proposition}
    Let $\mathbf T^{(N)}_a$ be the Toeplitz operator with measurable symbol $a$ on the image of the Bargmann type transform on $S_N$. Then we have that
    \begin{enumerate}
        \item $$\lim\limits_{N\to \infty} N^{-d}\mathrm{Tr}\,\mathbf T^{(N)}_a = \det(\Omega^{-1})\int_{\mathbb T_{\Omega}}a dV_X,$$
        where $dV_X$ is the standard volume element on the torus.
        \item If in addition $a$ is real valued, we have $$\lim\limits_{N\to \infty}N^{-d} N_{<\alpha}(\mathbf T^{(N)}_a) =\det(\Omega^{-1})\cdot{\mathrm{Vol}(a<\alpha)}.$$ 
        with a similar bound for $N_{>\alpha}(\mathbf T _a^{(N)}).$
    \end{enumerate}
\end{proposition}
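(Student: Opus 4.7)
The plan is to apply Theorems \ref{thm:TraceFormula} and \ref{thm:SpectrumCount} directly. By the construction in Section 3, the image of the Bargmann type transform on $S_N$ is exactly $H^0(\mathbb{T}_\Omega, L^{\otimes N})$, where $L$ is the positive theta line bundle on the compact K\"ahler manifold $\mathbb{T}_\Omega$ equipped with the real-analytic Hermitian weight $\phi(z) = \pi[H(z,z) + \Re B(z,z)]$. All hypotheses of the two theorems are therefore met, so the statement reduces to identifying the limit measure $(c_1(L))^d/d!$ with $\det((\Im\Omega)^{-1}) \cdot dV_X$.

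For this identification I would diagonalize. Choose a real orthogonal matrix $U$ with $U^T (\Im\Omega)^{-1} U = \mathrm{diag}(\mu_1,\ldots,\mu_d)$ and pass to complex coordinates $w = U^T z$. Then, using $\frac{i}{2} dw_j \wedge d\bar w_j = du_j \wedge dv_j$ with $w_j = u_j + i v_j$, one gets
\begin{equation*}
c_1(L) = \frac{i}{2} \sum_{j=1}^d \mu_j \, dw_j \wedge d\bar w_j = \sum_{j=1}^d \mu_j \, du_j \wedge dv_j.
\end{equation*}
The $2$-forms $du_j \wedge dv_j$ commute in the exterior algebra, so the $d$-th power collapses to
\begin{equation*}
\frac{(c_1(L))^d}{d!} = \mu_1 \cdots \mu_d \, du_1 \wedge dv_1 \wedge \cdots \wedge du_d \wedge dv_d = \det((\Im\Omega)^{-1}) \, dV_X,
\end{equation*}
because the orthogonal change of variables preserves the Euclidean volume element $dV_X$. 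In particular, since a fundamental domain of $\mathbb{T}_\Omega$ has Euclidean volume $\det(\Im\Omega)$, integrating this measure over $\mathbb{T}_\Omega$ gives $1$, consistent with $L$ being a principal polarization.

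Given this computation, part (1) is immediate from Theorem \ref{thm:TraceFormula} applied to $L$, and part (2) follows the same way from Theorem \ref{thm:SpectrumCount}, applied to the bounded real symbol $a$ and its sublevel set $\{a < \alpha\}$ (together with the analogous statement for $\{a > \alpha\}$). The only point to watch is interpretive: the above derivation produces the constant $\det((\Im\Omega)^{-1})$, whereas the proposition as stated writes $\det(\Omega^{-1})$; since $c_1(L)$ depends only on $\Im\Omega$ and the left-hand side is real, I read the latter as shorthand for the former. No other ingredient beyond the preliminaries of Section 2.4 is required; the work lies entirely in the identification of the top Chern power with the claimed measure.
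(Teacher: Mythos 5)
Your proposal is correct and follows essentially the same route as the paper: the paper likewise obtains the proposition by applying Theorems \ref{thm:TraceFormula} and \ref{thm:SpectrumCount} to the positive theta line bundle $L$ on $\mathbb{T}_\Omega$, with the identification $\frac{(c_1(L))^d}{d!}=\det\bigl((\Im\Omega)^{-1}\bigr)\,dV_X$ left implicit, which you spell out correctly via diagonalization. Your reading of $\det(\Omega^{-1})$ as shorthand for $\det\bigl((\Im\Omega)^{-1}\bigr)$ is also the right one, consistent with the constant $\det(\Im(\Omega))^{-1/2}$ appearing in the subsequent theorem.
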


What is now left to do is to translate this back into a result about restriction operators. 

The transformation from $\mathbb T_{\Omega}$ to $\mathbb T_{N}$ is given by 
$$
\Phi(x+i\xi):=((\Omega^{-1}N ) x, -\xi). 
$$
Consequently we have for the restriction operator with symbol $a$, a measurable function on $\mathbb T_{N},$ and for $\varphi_1, \varphi_2\in S_N$
\begin{align*}
\langle \mathbf R^{(N)}_{a,h_0^{\Omega/N}} \varphi_1 , \varphi_2\rangle_{S_N} &= \frac{1}{\|h_0^{\Omega/N}\|^2_{L^2}}\langle a \mathbf{V}_{h_0^{\Omega/N}}\varphi_1, \mathbf{V}_{h_0^{\Omega/N}}\varphi_2 \rangle_{L^2(\mathbb T_{N})}\\
& = \frac{1}{\|h_0^{\Omega/N}\|^2_{L^2}} \int\limits_{\mathbb T_N} a \mathbf{V}_{h_0^{\Omega/N}}\varphi_1(x,\xi) \overline{\mathbf{V}_{h_0^{\Omega/N}}\varphi_2}(x,\xi)dV_{\mathbb T_N} \\
& = \frac{1}{\|h_0^{\Omega/N}\|^2_{L^2}}\int\limits_{\mathbb T_\Omega}{(a\circ \Phi)} \mathbf{V}_{h_0^{\Omega/N}}\varphi_1(\Phi(x+i\xi)) \overline{\mathbf{V}_{h_0^{\Omega/N}}\varphi_2}(\Phi(x+i\xi)) \Phi^*(dV_{\mathbb T_N}). 
\end{align*}
Since
$$
\Phi^*(dx_1\wedge d\xi_1...\wedge dx_n\wedge d\xi_n) = N^d\det \Im(\Omega)^{-1}dV_{\mathbb T_\Omega},
$$
this is equal to
\begin{align*}
     \frac{N^d\det \Im(\Omega)^{-1}}{\|h_0^{\Omega/N}\|^2_{L^2}} \int\limits_{\mathbb T_\Omega}{(a\circ \Phi)}  \mathbf{B}_{\Omega,N}\varphi_1(z) \overline{\mathbf{B}_{\Omega,N}\varphi_2}(z)e^{-N\phi(z)}dV_{\mathbb T_\Omega}.
\end{align*}
Since 
$$
\|h^{\Omega/N}\|^2_{L^2} = \sqrt{\frac{N^d}{2^d\det(\Im(\Omega))}},
$$
we get that the Toeplitz operator with symbol $\Tilde{a}$ corresponds to the restriction operator with symbol
\begin{align*}
   2^{d/2}N^{d/2}\det(\Im(\Omega))^{-1/2}(\Tilde{a}\circ \Phi^{-1}).
\end{align*}
We apply the previous proposition and transform the integrals back to $\mathbb T_N$ to get the following theorem.
\begin{theorem}
    Let $a$ be a measurable function on $[0,1]^{2d}$, and $\mathbf R^{(N)}$ be the restriction operator on $S_N$ with window $h_0^{\Omega/N}$ and symbol $C_N\cdot a(N^{-1}{x},\xi)$, where
    $$
    C_N:=2^{-d/2}N^{-3d/2}\det(\Im(\Omega))^{-1/2}.
    $$
    Then we have that 
    \begin{enumerate}
        \item when $a$ is bounded, 
        $$\lim\limits_{N\to \infty} \mathrm{Tr}\,\mathbf{R}^{(N)} = \int_{[0,1]^{2d}}a(x,\xi) dxd\xi,$$
        
        \item when $a$ is real valued, $$\lim\limits_{N\to \infty} N_{<\alpha}(\mathbf{R}^{(N)}) ={\mathrm{Vol}(a<\alpha)}.$$ 
        with a similar bound for $N_{>\alpha}(\mathbf R^{(N)}).$
    \end{enumerate}
\end{theorem}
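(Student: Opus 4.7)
The plan is to exploit the unitary correspondence, set up in the calculation immediately preceding the theorem, between the restriction operator $\mathbf R^{(N)}$ on $S_N$ and a Toeplitz operator on the Bargmann--Fock space $H^0(\mathbb T_\Omega, L^{\otimes N})$; this reduces both parts of the theorem to the asymptotic formulas established in the preceding proposition (itself a specialization of Theorems \ref{thm:TraceFormula} and \ref{thm:SpectrumCount}). By that computation, the restriction operator with symbol $a_0$ on $\mathbb T_N$ is unitarily equivalent, through the Bargmann-type transform, to a Toeplitz operator $\mathbf T^{(N)}_{\tilde a_N}$ on $H^0(\mathbb T_\Omega, L^{\otimes N})$, whose symbol $\tilde a_N$ is obtained from $a_0\circ\Phi$ by multiplication by the explicit $N$-dependent normalization $2^{-d/2}N^{-d/2}\det(\Im\Omega)^{1/2}$. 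Substituting $a_0(x,\xi)=C_N\cdot a(N^{-1}x,\xi)$ and using $N^{-1}\Phi_1(x+i\xi)=i\Omega^{-1}x$, the Toeplitz symbol factors as $\tilde a_N=\lambda_N\cdot(a\circ\Psi)$, where $\Psi(x+i\xi)=(i\Omega^{-1}x,-\xi)$ and $\lambda_N$ is a purely $N$-dependent scalar assembled from $C_N$ and the normalization factor.

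With the symbol identification in hand, I would apply the preceding proposition. For part (1), unitary equivalence gives $\mathrm{Tr}\,\mathbf R^{(N)}=\mathrm{Tr}\,\mathbf T^{(N)}_{\tilde a_N}=\lambda_N\,\mathrm{Tr}\,\mathbf T^{(N)}_{a\circ\Psi}$; the asymptotic formula $N^{-d}\mathrm{Tr}\,\mathbf T^{(N)}_b\to\int_{\mathbb T_\Omega}b\,c_1(L)^d/d!$ for bounded $b$ then reduces the problem to integrating $a\circ\Psi$ against $\det(\Im\Omega)^{-1}\,dx\,d\xi$ on $\mathbb T_\Omega$, and the change of variables induced by $\Psi$ carries this integral onto $[0,1]^{2d}$ with Jacobian $\det(\Im\Omega)$, absorbing the $\det(\Im\Omega)^{-1}$ from $c_1(L)^d/d!$. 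The choice of $C_N$ is engineered so that the remaining $N$-factors cancel, leaving $\int_{[0,1]^{2d}}a$. For part (2), unitary equivalence gives $N_{<\alpha}(\mathbf R^{(N)})=N_{<\alpha}(\mathbf T^{(N)}_{\tilde a_N})$; the threshold rescales compatibly with $\lambda_N$ so that the eigenvalue-counting statement of the preceding proposition, applied to $a\circ\Psi$, produces $\mathrm{Vol}(a<\alpha)$ on $[0,1]^{2d}$ after the same change of variables. The analogous statement for $N_{>\alpha}$ follows by applying the argument to $-a$.

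The main obstacle is the bookkeeping of constants, not anything conceptually subtle. The factors that must cancel exactly are the Pfaffian $N^d$ from $\dim H^0(\mathbb T_\Omega, L^{\otimes N})$, the Jacobian $N^d\det(\Im\Omega)^{-1}$ of $\Phi$, the Moyal normalization coming from $\|h_0^{\Omega/N}\|^2_{L^2}$, the $N^{-d}$ from the rescaling $y=N^{-1}x$ inside the symbol, and the explicit constant $C_N$. The specific form of $C_N$ in the theorem statement is precisely what makes this cancellation exact. Once this is verified, both parts follow directly from the Berman--Witt Nystr\"om asymptotics already invoked.
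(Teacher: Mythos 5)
Your proposal is correct and takes essentially the same route as the paper: the identification, carried out in the computation immediately preceding the theorem, of $\mathbf R^{(N)}$ with a Toeplitz operator on $H^0(\mathbb T_\Omega, L^{\otimes N})$ via the map $\Phi$, the factor $\|h_0^{\Omega/N}\|_{L^2}^2$ and the Jacobian $N^d\det\Im(\Omega)^{-1}$, followed by an application of the preceding proposition (i.e.\ Theorems \ref{thm:TraceFormula} and \ref{thm:SpectrumCount}) and a change of variables back to $[0,1]^{2d}$. The one point to watch is the one you yourself flag: the eigenvalue-counting step needs the overall scalar $\lambda_N$ multiplying the transported symbol to be exactly $1$ (otherwise the threshold $\alpha$ becomes $N$-dependent and Theorem \ref{thm:SpectrumCount} does not apply verbatim), which is exactly the role the constant $C_N$ is designed to play, and the paper verifies this cancellation no more explicitly than you do.
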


\begin{appendix}
    \section{Terminology of Line Bundles}
    This appendix is meant as a brief definition of some key terms surrounding line bundles for the convienience of the reader. It is by no means intended as an in depth treatment of these terms. We refer to \cite{agbook,analmeth} for more exhaustive reports of these concepts.

    \begin{definition}
    A complex line bundle over a complex manifold $X$ is a manifold $L$ with a fibration map $\pi: L \to X$ such that for every $z\in X$ there exists a neighborhood $U$ of $z$ with a bijective map $g:L_U:=\mathrm{pr_1}^{-1}(U)\to U\times\mathbb C $  such that the following diagram commutes,
\begin{figure}[h!]
    \centering
    \includegraphics[width=.3\linewidth]{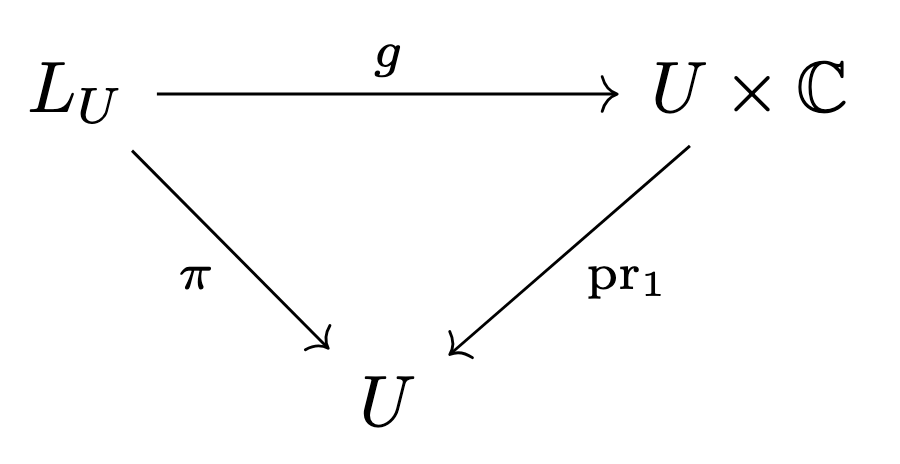}
\end{figure}\\
We call $g$ a trivialization. Furthermore, we require the space $L_z:=\mathrm{pr_X}^{-1}(z)$ (we call this the fiber of $L$ at $z$) to be a complex vector space of dimension 1 and $g|_{L_z}$ to be a vector space isomorphism. Lastly, we require that for any two trivializing sets $U,V$ with $U\cap V \neq \emptyset$, the transition function $g_U\circ g_V^{-1}$ is holomorphic.
    \end{definition}

\begin{definition}
A section of a line bundle is a map $\sigma:X\to L$ such that $\sigma(z)\in L_z$. 
\end{definition}

Complex line bundles are studied in complex geometry because the spaces of their global holomorphic sections which we will denote by $H^0(X,L)$, carry much richer information than the holomorphic functions mapping into $\mathbb C$. Especially in the case of compact manifolds where the holomorphic functions are all constant.

We may imbue complex line bundles with a metric structure by defining a Hermitian metric. 

\begin{definition}   
A Hermitian metric is family of Hermitian inner products $(h_z:L_z\times L_z \to \mathbb C)_{z\in X}$ which is smooth with respect to the base manifold $X$, in other words, for any local smooth section $\sigma$, of $L$ we have that the map
$$
z\mapsto h_z(\sigma(z),\sigma(z)),
$$
is smooth. In a trivialisation, (so in a local setting) the metric $h=(h_z)_{z\in X}$ can be written in the following form
$$
h(\sigma_1(z),\sigma_2(z))=\langle \sigma_1(z),\sigma_2(z)\rangle_{\mathbb C}\cdot e^{-\phi(z)}
$$
where $\sigma_1$ and $\sigma_2$ are local sections of $L$ and $\phi$ is some smooth function into the reals. Note that the sections on the right hand side must be expressed in terms of trivializations for the expression to make sense.
\end{definition}

The weight function $\phi$ is very important to the geometry of line bundles. In particular we define the curvature of $L$ as

$$
dd^c\phi:=\frac{i}{2\pi}\partial\overline{\partial}\phi.
$$
In turn we define the first Chern class of $L$ denoted by $c_1(L)$ as the cohomology class of the curvature form in $H^2(X,\mathbb C).$

If the first curvature of $L$ is a (strictly) positive form, i.e. $\phi$ is a (strictly) plurisubharmonic function, then we say that $L$ is semipositive (positive/ample).

Operations from linear algebra on line bundles are defined fiber wise. In particular, the notions of taking the dual and tensor products are defined fiber wise. We may use these to give the isomorphism classes of line bundles over compact projective manifolds (such as abelian varieties) a group structure. The group operation is then given by the tensor product, and the inverse map is given by the dual. The unit of this group is given by the trivial line bundle $X\times \mathbb C$. This group is called the Picard group and denoted by $\mathrm{Pic}(X)$. It contains the subgroup of line bundles which are topologically (but not necessarily holomorphically) trivial, which we denote by $\mathrm{Pic}^0(X)$.

We also remark that for line bundles $L_1, L_2\in \mathrm{Pic}(X)$, with metrics that have weight functions $\phi_1, \phi_2$ respectively, the weight of $L_1^{-1}$ is given by $-\phi_1$ and the weight of $L_1\otimes L_2$ is given by $\phi_1+\phi_2$.

We define two generalizations of postivity of line bundles: bigness and nefness.

\begin{definition}
We call a complex line bundle over a $d$ dimensional manifold big if there exists a positive real number $a$, and a $p_0\in \mathbb N$ such that for every $p\ge p_0$ we have
$$
\mathrm{dim}H^0(X,L^{\otimes p})\ge a\cdot p^d.
$$
\end{definition}
\begin{definition}
We call a line bundle $L$ on a K\"ahler manifold $(X,\omega)$ nef if we have for any $\varepsilon>0$ that there exists a metric $\phi_\varepsilon$ on $L$ such that
$$
dd^c\phi_\varepsilon \ge -\varepsilon\omega.
$$
\end{definition}

We remark that 

\end{appendix}

\bibliography{ref}{}
\bibliographystyle{amsalpha}

\end{document}